\nonstopmode \numberwithin{equation}{section}
\theoremstyle{plain}
\newtheorem{thm}{Theorem}[section]
\newtheorem{cor}{Corollary}[section]
\newtheorem{lem}{Lemma}[section]
\newtheorem{prop}{Proposition}
\newtheorem{conj}{Conjecture}
\newenvironment{customthm}[1]
  {\innercustomthm}
  {\endinnercustomthm}
\theoremstyle{definition}
\newtheorem{example}{Example}[section]
\newtheorem{prob}{Problem}
\newtheorem{rem}{Remark}[section]
\newcounter{minutes}\setcounter{minutes}{\time}
\newcounter{hours}\setcounter{hours}{\time}
\newcounter {own}
\def\theown {\thesection       .\arabic{own}}
\newenvironment{pf}[1][]{%
 \vskip 3mm
 \noindent
 \ifthenelse{\equal{#1}{}}%
  {{\slshape Proof. }}%
  {{\slshape #1.} }%
 }%
{\qed\bigskip}
\newcounter{alphabet}
\newcounter{tmp}
\def\be{\begin{equation}}
\def\ee{\end{equation}}
\newcommand{\bee}{\begin{enumerate}}
\newcommand{\eee}{\end{enumerate}}
\newcommand{\blem}{\begin{lem}}
\newcommand{\elem}{\end{lem}}
\newcommand{\bthm}{\begin{thm}}
\newcommand{\ethm}{\end{thm}}
\newcommand{\bcor}{\begin{cor}}
\newcommand{\ecor}{\end{cor}}
\newcommand{\beg}{\begin{examp}}
\newcommand{\eeg}{\end{examp}}
\newcommand{\begs}{\begin{examples}}
\newcommand{\eegs}{\end{examples}}
\newcommand{\bdefe}{\begin{defin}}
\newcommand{\edefe}{\end{defin}}
\newcommand{\bprob}{\begin{prob}}
\newcommand{\eprob}{\end{prob}}
\newcommand{\bei}{\begin{itemize}}
\newcommand{\eei}{\end{itemize}}
\newcommand{\bcon}{\begin{conj}}
\newcommand{\econ}{\end{conj}}
\newcommand{\bcons}{\begin{conjs}}
\newcommand{\econs}{\end{conjs}}
\newcommand{\bprop}{\begin{prop}}
\newcommand{\eprop}{\end{prop}}
\newcommand{\br}{\begin{rem}}
\newcommand{\er}{\end{rem}}
\newcommand{\brs}{\begin{rems}}
\newcommand{\ers}{\end{rems}}
\newcommand{\bo}{\begin{obser}}
\newcommand{\eo}{\end{obser}}
\newcommand{\bos}{\begin{obsers}}
\newcommand{\eos}{\end{obsers}}
\newcommand{\bpf}{\begin{pf}}
\newcommand{\epf}{\end{pf}}
\newcommand{\ba}{\begin{array}}
\newcommand{\ea}{\end{array}}
\newcommand{\beq}{\begin{eqnarray}}
\newcommand{\beqq}{\begin{eqnarray*}}
\newcommand{\eeq}{\end{eqnarray}}
\newcommand{\eeqq}{\end{eqnarray*}}
\begin{document}

\title{Coefficient estimates for certain subclass of analytic functions defined by subordination}

\author{Nirupam Ghosh}
\address{Nirupam Ghosh,
Department of Mathematics,
Indian Institute of Technology Kharagpur,
Kharagpur-721 302, West Bengal, India.}
\email{nirupamghoshmath@gmail.com}

\author{A. Vasudevarao}
\address{A. Vasudevarao,
Department of Mathematics,
Indian Institute of Technology Kharagpur,
Kharagpur-721 302, West Bengal, India.}
\email{alluvasu@maths.iitkgp.ernet.in}

\subjclass[2010]{Primary 30C45, 30C50}
\keywords{Analytic,univalent, starlike, convex functions, subordination, coefficient estimates.}

\def\thefootnote{}
\footnotetext{ {\tiny File:~\jobname.tex,
printed: \number\year-\number\month-\number\day,
          \thehours.\ifnum\theminutes<10{0}\fi\theminutes }
} \makeatletter\def\thefootnote{\@arabic\c@footnote}\makeatother

\begin{abstract}
In this article we determine the coefficient bounds for functions in certain subclasses of analytic functions defined by subordination which are related to the well-known classes of starlike and convex functions. The main results deal with some open problems proposed by
Q.H. Xu {\it et al}.(\cite{Xu-Cai-Srivastava-2013}, \cite{Xu-Lv-Luo-Srivastava-2013}). An application of Jack lemma for certain subclass of starlike functions has been discussed.
\end{abstract}

\maketitle
\pagestyle{myheadings}
\markboth{ Nirupam Ghosh and A. Vasudevarao }{Coefficient estimates for certain subclass of analytic functions}

\section{introduction}
Let $\mathcal{A}$ denote the family of analytic functions $f$ in the unit disk $\mathbb{D}:=\{z\in\mathbb{C}:\,|z|<1\}$ normalized by
$f(0)=0=f'(0)-1 $. If $f\in \mathcal{A}$ then $f$ has the following representation
\begin{equation}\label{niru vasu p1 e001}
f(z)=z+\sum_{n=2}^{\infty}a_n z^n.
\end{equation}
A function $f$ is said to be univalent in  a domain $\Omega\subseteq\mathbb{C}$ if it is injective in $\Omega$. Let $\mathcal{S}$ denote the
class of univalent functions in $\mathcal{A}$.
A function $f\in\mathcal{A}$ is in the class  $\mathcal{S}^*(\alpha)$, called starlike functions of order $\alpha$, if
$$
{\rm Re\,}\left(\frac{zf'(z)}{f(z)}\right)> \alpha\quad\mbox{ for }  \quad z\in\mathbb{D}
$$
and in the class $\mathcal{C}(\alpha)$, called convex functions of order $\alpha$, if
$$
{\rm Re\,}\left(1+\frac{z f''(z)}{f'(z)}\right)> \alpha \quad\mbox{ for }  \quad z\in\mathbb{D}.
$$
Clearly the classes $\mathcal{S}^*:=\mathcal{S}^*(0)$ and $\mathcal{C}:=\mathcal{C}(0)$ are the well- known classes of starlike and convex functions respectively.
It is well -known that $\mathcal{C}\subsetneq\mathcal{S}^*\subsetneq\mathcal{S}$. A function $f\in\mathcal{A}$ is in the class $\mathcal{SP}(\alpha)$, called $\alpha$-Spiral functions, if
$$
{\rm Re\,}\left(e^{i\alpha} \frac{zf'(z)}{f(z)}\right)> 0 \quad \mbox{for}\quad z\in\mathbb{D}.
$$
The class $\mathcal{SP}(\alpha)$ has been introduced by \v{S}pa\v{c}ek \cite{Spacek-33} in 1933.

Let $f$ and  $g$ be analytic functions in the unit disk $\mathbb{D}$. A function $f$ is said to be subordinate to $g$, written  as
$f\prec g$ or $f(z)\prec g(z)$, if there exists an analytic function $\omega: \mathbb{D} \rightarrow \mathbb{D}$ with $\omega(0)=0$ such that $f(z)=g(\omega(z))$.
If $g$ is univalent, then $f\prec g$ if and only if $f(0)=g(0)$ and $f(\mathbb{D})\subseteq g(\mathbb{D})$.

For $A, B \in \mathbb{C}$ with $|B|\leq 1$, let $\mathcal{S}^*[A, B]$ denote the class of functions $f \in \mathcal{A}$ which satisfy the following subordination relation
$$
\frac{zf'(z)}{f(z)}\prec \frac{1 + Az}{1 + Bz} \quad \mbox{for} \quad z \in \mathbb{D}.
$$
Without loss of generality we may assume that $B$ is a real. In view of $S^*[A, B] = S^*[-A, -B]$, we can consider $-1 \leq B \leq 0.$ For particular choice of parameters $A$ and $B$, we can obtain $ \mathcal{S}^* := \mathcal{S}^*[1 , -1]$ and $\mathcal{S}^*(\alpha) := \mathcal{S}^*[1 - 2\alpha, -1]$. If we choose $A = e^{-2i\alpha}$ and $B = -1$ then  $\mathcal{SP}(\alpha) :=S^*[e^{-2i\alpha}, -1]$.

Nasr and Aouf \cite{Nasr-Aouf-1983,Nasr-Aouf-1983a,Nasr-Aouf-1985} and Wiatrowski \cite{Wiatrowski-1970} extended the classes $\mathcal{S}^*(\alpha)$
and $\mathcal{C}(\alpha)$ by introducing  $\mathcal{S}^*(\gamma)$ and $\mathcal{C}(\gamma)$,
the class of starlike functions of complex order $\gamma$ and   the class of convex functions of
complex order  $\gamma$  respectively.
More preciously, a function $f\in\mathcal{A}$ is said to be in the class $\mathcal{S}^*(\gamma)$,
if it satisfies the following condition
$$
{\rm Re\,}\left(1+\frac{1}{\gamma}\left(\frac{zf'(z)}{f(z)}-1\right)\right)>0\quad \mbox{for} \quad z\in\mathbb{D}\quad\mbox{ and } \quad  \gamma\in\mathbb{C}\setminus\{0\}.
$$
Similarly, a function $f\in\mathcal{A}$ is said to be in the class $\mathcal{C}(\gamma)$, if it satisfies the following condition
$$
{\rm Re\,}\left(1+\frac{1}{\gamma}\left(\frac{z f''(z)}{f'(z)}\right)\right)>0 \quad \mbox{for} \quad z\in\mathbb{D} \quad \mbox{ and } \quad  \gamma\in\mathbb{C}\setminus\{0\}.
$$
The function classes $\mathcal{S}^*(\gamma)$ and  $\mathcal{C}(\gamma)$ have been extensively studied by many
authors (for example, see \cite{Altintas-Irmak-Srivastava-1995,Altintas-Ozkan-2001,Altintas-Srivastava-2001,Altintas-Ozkan-Srivastava-2000,
Altintas-Ozkan-Srivastava-2001}).
For  fixed $\beta>1$, the classes $\mathcal{M}(\beta)$ and $\mathcal{N}(\beta)$ are defined by
\begin{eqnarray*}
\mathcal{M}(\beta)&:=&\left\{f\in\mathcal{A}:\, {\rm Re\,}\left(\frac{zf'(z)}{f(z)}\right)<\beta \quad\mbox{ for } z\in\mathbb{D}\right\}\quad\mbox{ and }\\
\mathcal{N}(\beta)&:=&\left\{f\in\mathcal{A}:\, {\rm Re\,}\left(1+\frac{zf''(z)}{f'(z)}\right)<\beta \quad\mbox{ for } z\in\mathbb{D}\right\}.
\end{eqnarray*}
If we choose $\gamma = (1- \beta)$ then the class $ \mathcal{M}(\beta):=\mathcal{S}^*(1 - \beta)$ and
$\mathcal{N}(\beta):=\mathcal{C}(1 - \beta)$.
The classes  $\mathcal{M}(\beta)$ and $\mathcal{N}(\beta)$ have been extensively discussed by
Obradovic {\it et al.} \cite{Obradovic-Ponnusamy-Wirths-2013} and Firoz Ali and Vasudevarao \cite{Firoz-Vasudeva-2015}.

In 2007, Altintas {\it et al.} \cite{Altintas-Irmak-Owa-Srivastava-2007} introduced the classes $\mathcal{S}c(\gamma,\lambda,\beta)$ and
$\mathcal{B}(\gamma,\lambda,\beta,\mu)$. A function $f\in\mathcal{A}$ is in the class $\mathcal{S}c(\gamma,\lambda,\beta)$ for
$\gamma\in\mathbb{C}\setminus\{0\},0\leq\lambda\leq1$ and $ 0\leq\beta<1$ if it satisfies the following condition
$$
{\rm Re\,}\left(1+\frac{1}{\gamma} \left(\frac{z (\lambda zf'(z) + (1-\lambda) f(z))'}{(\lambda zf'(z) +
(1-\lambda) f(z))}-1  \right)\right)>\beta\quad\mbox{ for } \quad z\in\mathbb{D}.
$$
Clearly $\mathcal{S}^*(\gamma)$ := $\mathcal{S}c(\gamma,0,0)$   and   $\mathcal{C}(\gamma)$ := $\mathcal{S}c(\gamma,1,0)$.
A function $w=f(z)$ belongs to $\mathcal{A}$  is said to be in the class $\mathcal{B}(\gamma,\lambda,\beta,\mu)$ if it satisfies the following
non-homogeneous Cauchy-Euler differential equation
$$
z^2\frac{d^2w}{dz^2} + \mu z \frac{dw}{dz} + \mu (\mu +1)w = (\mu + 1)(\mu + 2)g(z),
$$
where  $g\in\mathcal{S}c(\gamma,\lambda,\beta)$ and  $\mu\in\mathbb{R}\setminus(-\infty , -1]$.
In \cite{Altintas-Irmak-Owa-Srivastava-2007}, the authors obtained the coefficient bounds for functions in the classes $\mathcal{S}c(\gamma,\lambda,\beta)$
and $\mathcal{B}(\gamma,\lambda,\beta,\mu)$ but the results were not sharp.

In 2011,  Srivastava {\it et al.} \cite{Srivastava-Altintas-Serenbarg-2011} introduced the classes $\mathcal{S}(\lambda,\gamma,A,B)$ and
$\mathcal{K}(\lambda,\gamma,A,B,m,\mu)$. A function $f\in\mathcal{A}$ is in the class  $\mathcal{S}(\lambda,\gamma,A,B)$ if it satisfies the following subordination condition

$$
{\rm }1+\frac{1}{\gamma}\left(\frac{z (\lambda zf'(z) + (1-\lambda) f(z))'}{(\lambda zf'(z) +
(1-\lambda) f(z))}-1\right)\prec \frac{1 + Az}{1 + Bz}\quad\mbox{ for } \quad z\in\mathbb{D},
$$
where $\gamma\in\mathbb{C}\setminus\{0\}, 0\leq\lambda\leq1$ and $-1\leq B < A \leq1$. Similarly, a function $w =f(z)$ belongs to $\mathcal{A}$
is said to be in the class $\mathcal{K}(\lambda,\gamma,A,B,m,\mu$) if it satisfies the following non-homogeneous Cauchy-Euler type
differential equation of order $m$
\begin{equation}\label{niru vasu p1 e0014}
z^m\frac{d^mw}{dz^m} + \dbinom{m}{1} (\mu + m -1)z^{m-1}\frac{d^{m-1}w}{dz^{m-1}} + \cdots+ \dbinom{m}{m}\prod_{j=0}^{m-1}(\mu + j)w =
g(z)\prod_{j=0}^{m-1}(\mu + j + 1),
\end{equation}
where  $g\in \mathcal{S}(\lambda,\gamma,A,B)$, $\mu\in\mathbb{R}\setminus(-\infty , -1]$ and $m\in\mathbb{N}\setminus\{1\}$.
For particular choice of the parameters $A$ and $B$, we obtain  $\mathcal{S}c(\gamma,\lambda,\beta) := \mathcal{S}(\lambda,\gamma,1-2\beta,-1)$,
$\mathcal{S}^*(\gamma):= \mathcal{S}(0,\gamma,1,-1)$ and  $\mathcal{C}(\gamma) := \mathcal{S}(1,\gamma,1,-1)$. The coefficient bounds for functions
in the classes $\mathcal{S}(\lambda,\gamma,A,B)$ and $\mathcal{K}(\lambda,\gamma,A,B,m,\mu)$ have been investigated
by Srivastava {\it et al.} \cite{Srivastava-Altintas-Serenbarg-2011} but the results are not sharp.
Recently, Q-H Xu {\it et al.} \cite{Xu-Cai-Srivastava-2013} obtained the following sharp coefficient bounds for functions
in classes $\mathcal{S}(\lambda,\gamma,A,B)$ and $\mathcal{K}(\lambda,\gamma,A,B,m,\mu)$ with some restriction on the parameters.

\begin{customthm}{A}\cite{Xu-Cai-Srivastava-2013}
Let $f\in\mathcal{S}(\lambda,\gamma,A,B)$ be given by (\ref{niru vasu p1 e001}), where $\gamma\in\mathbb{C}\setminus\{0\}, 0\leq\lambda\leq1$ and
$-1\leq B < A \leq1$. If
$$
|\gamma(A-B)-B(n-2)|\geq (n-2),
$$
then
\begin{equation}\label{niru vasu p1 e0010}
|a_n|\leq \frac{ \prod_{j=0}^{n-2}{|(A-B)\gamma-jB|}}{{(1+\lambda(n-1))}{(n-1)!}}, \quad n\in\mathbb{N}\setminus\{1\}
\end{equation}
and the estimates in (\ref{niru vasu p1 e0010}) are sharp.
\end{customthm}

\begin{customthm}{B}\cite{Xu-Cai-Srivastava-2013}
Let $f\in\mathcal{K}(\lambda,\gamma,A,B,m,\mu)$ be given by (\ref{niru vasu p1 e001}), where
$\gamma\in\mathbb{C}\setminus\{0\}, 0\leq\lambda\leq1, -1\leq B < A \leq1$, $\mu\in\mathbb{R}\setminus(-\infty , -1]$ and $m\in\mathbb{N}\setminus\{1\}$. If
$$
|\gamma(A-B)-B(n-2)|\geq (n-2),
$$
then
\begin{equation}\label{niru vasu p1 e0015}
|a_n|\leq \frac{{ \prod_{j=0}^{n-2}{|(A-B)\gamma-jB|}{\prod_{j=0}^{m-1}(\mu +j+1)}}}{{(1+\lambda(n-1))}{(n-1)!}{\prod_{j=0}^{m-1}(\mu+j+n)}},
\quad n,m\in\mathbb{N}\setminus\{1\}
\end{equation}
and the estimates in (\ref{niru vasu p1 e0015}) are sharp.
\end{customthm}

In 2013, Xu {\it et al.} \cite{Xu-Cai-Srivastava-2013} proposed the following two problems concerning the coefficient bounds for functions in the  class $\mathcal{S}(\lambda,\gamma,A,B)$.

\begin{prob}\label{niru vasu p1 problem 03}
If the function $f\in\mathcal{S}(\lambda,\gamma,A,B)$ is given by (\ref{niru vasu p1 e001}) with $\gamma\in\mathbb{C}\setminus\{0\},0\leq\lambda\leq1$
and $-1\leq B < A \leq1$ then prove or disprove that
\begin{equation}\label{niru vasu p1 e0011}
|a_n|\leq \frac{ \prod_{j=0}^{n-2}{|(A-B)\gamma-jB|}}{{(1+\lambda(n-1))}{(n-1)!}}, \quad n\in\mathbb{N}\setminus\{1\}.
\end{equation}
\end{prob}

\begin{prob}\label{niru vasu p1 problem 04}
If the coefficient estimates in (\ref{niru vasu p1 e0011}) do hold true then prove or disprove that these estimates are sharp.

\end{prob}

In 2013, Xu {\it et al.} \cite{Xu-Lv-Luo-Srivastava-2013} considered the class $\mathcal{S}^{\beta}(A,B)$ by the condition that a function
$f\in\mathcal{A}$ is in the class $\mathcal{S}^{\beta}(A,B)$ if it satisfies

$$
(1+i\tan\beta)\frac{zf'(z)}{ f(z)}-i\tan\beta\prec \frac{1 + Az}{1 + Bz}\quad\mbox{ for } \quad z\in\mathbb{D},
$$
where $-\pi/2<\beta<\pi/2$ and $-1\leq B < A \leq1$ and
obtained the following coefficient bounds for functions in this class.

\begin{customthm}{C}\cite{Xu-Lv-Luo-Srivastava-2013}
Let $f\in\mathcal{S}^{\beta}(A,B)$ be given by (\ref{niru vasu p1 e001}) with $-\pi/2<\beta<\pi/2$, $-1\leq B < A \leq1$ and
$n\in\mathbb{N}\setminus\{1\}$. Suppose also that
\begin{equation}\label{niru-vasu-p1-e005ab}
(A- (n-1)B)^2 \cos^2{\beta} + (n-2)^2 (B^2\sin^2{\beta}-1)\geq 0.
\end{equation}
Then
\begin{equation}\label{niru vasu p1  e005}
|a_n|\leq \prod_{j=0}^{n-2}\bigg(\frac{|(A-B)e^{-i\beta}\cos{\beta}-jB|}{j+1}\bigg), \quad n\in\mathbb{N}\setminus\{1\}
\end{equation}
and the  estimates in (\ref{niru vasu p1 e005}) are sharp .
\end{customthm}
We note that  Theorem C  is proved under the additional assumption  (\ref{niru-vasu-p1-e005ab}). In the same paper the authors proposed
the following two problems concerning the coefficient bounds for functions in class $\mathcal{S}^{\beta}(A,B)$
without assuming the additional condition  (\ref{niru-vasu-p1-e005ab}).

\begin{prob}\label{niru vasu p1 problem 01}
If the function $f\in\mathcal{S}^{\beta}(A,B)$ is given by (\ref{niru vasu p1 e001}) with $-\pi/2<\beta<\pi/2$ and  $-1\leq B < A \leq1$,
then prove or disprove that
\begin{equation}\label{niru vasu p1 e006}
|a_n|\leq \prod_{j=0}^{n-2}\bigg(\frac{|(A-B)e^{-i\beta}\cos{\beta}-jB|}{j+1}\bigg), \quad n\in\mathbb{N}\setminus\{1\}.
\end{equation}
\end{prob}

\begin{prob}\label{niru vasu p1 problem 02}
If the coefficient estimates in (\ref{niru vasu p1 e006}) do hold true  then prove or disprove that these estimates are sharp.

\end{prob}

It is interesting to note that if we choose $\lambda=0$ and $\gamma = {1}/{(1+i\tan\beta)} $ then the class $\mathcal{S}(\lambda,\gamma,A,B)$ reduces
 to $\mathcal{S}^{\beta}(A,B)$. Hence it is sufficient to study Problem \ref{niru vasu p1 problem 03} and Problem \ref{niru vasu p1 problem 04} for
 functions in the class $\mathcal{S}(\lambda,\gamma,A,B)$. \\

The  problem of coefficient estimates  is one of the most exciting problem in the theory of univalent  functions.
For $f\in\mathcal{S}$ of the form (\ref{niru vasu p1 e001}), it was proved that $|a_2|\leq 2$ and proposed a conjecture $|a_n|\leq n$ for $n \geq 3$ by Bieberbach in $1916$. This celebrated conjecture was proved affirmatively by
Branges in  $1984$. This  motivates us to   determine the coefficient bounds for functions in some subclasses of analytic functions which are defined by the subordination and these classes are related to the well-known classes of starlike and convex functions.

The  main aim of this paper is to  attempt the  aforementioned problems in much detailed. In fact,
the main results of this paper deal with some open problems proposed by Q.H. Xu {\it et al}.(\cite{Xu-Cai-Srivastava-2013}, \cite{Xu-Lv-Luo-Srivastava-2013}).

Before  proving  our main results, we recall   the following lemma due to  Xu {\it et al.} \cite{Xu-Cai-Srivastava-2013}.

\begin{lem}\label{niru vasu p1 l001}\cite{Xu-Cai-Srivastava-2013}
Let the parameters A,B $\lambda ,\gamma$ and $m$ satisfy $\gamma\in\mathbb{C}\setminus\{0\},0\leq\lambda\leq1,-1\leq B < A \leq1$ and
$m\in\mathbb{N}\setminus\{1\}$. If
$|\gamma(A-B)-B(m-2)|\geq (m-2)$, then
\begin{align*}
|\gamma|^2(A - B)^2 &+ \sum_{k=2}^{m-1}\left[{\frac{\left||\gamma(A - B) - B(k-1)|^2 - (k-1)^2\right|}{((k-1)!)^2}}\right]
\prod_{j=0}^{k-2}|\gamma(A - B) - jB|^2\\[3mm]
&=\frac {\prod_{j=0}^{m-2}|\gamma(A -B)-Bj|^2}{((m-2)!)^2}.
\end{align*}
\end{lem}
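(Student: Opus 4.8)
The plan is to reduce the identity to a telescoping sum, after first removing the absolute value that appears in each summand. Throughout I write $c:=\gamma(A-B)$ and, for $k\ge 2$, set $\Pi_k:=\prod_{j=0}^{k-2}|c-jB|^2$, so that $\Pi_2=|c|^2=|\gamma|^2(A-B)^2$ (recall that $A,B$ are real, whence $A-B$ is real). In this notation the first term on the left-hand side is exactly $\Pi_2$, the $k$-th summand is $\big(|c-B(k-1)|^2-(k-1)^2\big)\Pi_k/((k-1)!)^2$ once its absolute value is stripped, and the right-hand side is $\Pi_m/((m-2)!)^2$. The decisive preliminary step is therefore to show that, under the hypothesis $|\gamma(A-B)-B(m-2)|\ge m-2$, one has $|c-B(k-1)|^2-(k-1)^2\ge 0$ for every $k$ with $2\le k\le m-1$, so that each absolute value may be dropped.

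I would establish this sign condition by a concavity argument. Writing $c=x+iy$, consider the real quadratic
\[
g(t):=|c-Bt|^2-t^2=(B^2-1)t^2-2Bx\,t+(x^2+y^2).
\]
Since $-1\le B<A\le 1$ forces $|B|\le 1$, the leading coefficient $B^2-1$ is $\le 0$, so $g$ is concave on $\IR$ (and affine in the boundary case $B=-1$). Now $g(0)=|c|^2\ge 0$ always, while the hypothesis says precisely that $g(m-2)=|c-B(m-2)|^2-(m-2)^2\ge 0$. A concave function that is nonnegative at the two endpoints $0$ and $m-2$ is nonnegative on the whole interval $[0,m-2]$; evaluating at the integers $t=k-1$ for $1\le k-1\le m-2$ gives $g(k-1)\ge 0$, which is the required inequality. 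This is the step I expect to be the main obstacle, since it is where the hypothesis is consumed and where the restriction $|B|\le 1$ is essential; the rest is formal.

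With the absolute values removed, I would finish by telescoping. Using $\Pi_{k+1}=|c-B(k-1)|^2\,\Pi_k$ together with the identity $(k-1)^2/((k-1)!)^2=1/((k-2)!)^2$, the $k$-th summand becomes
\[
\frac{|c-B(k-1)|^2-(k-1)^2}{((k-1)!)^2}\,\Pi_k=\frac{\Pi_{k+1}}{((k-1)!)^2}-\frac{\Pi_k}{((k-2)!)^2}.
\]
Setting $R_k:=\Pi_{k+1}/((k-1)!)^2$, the right-hand side above is $R_k-R_{k-1}$, so that
\[
\sum_{k=2}^{m-1}\frac{|c-B(k-1)|^2-(k-1)^2}{((k-1)!)^2}\,\Pi_k=R_{m-1}-R_1=\frac{\Pi_m}{((m-2)!)^2}-|\gamma|^2(A-B)^2.
\]
Adding the first term $\Pi_2=|\gamma|^2(A-B)^2$ then yields the claimed value $\Pi_m/((m-2)!)^2$, completing the proof. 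The degenerate case $m=2$, where the sum is empty, is immediate, since then both sides reduce to $|\gamma|^2(A-B)^2$.
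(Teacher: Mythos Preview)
Your argument is correct. Note, however, that the paper does not actually prove this lemma: it is merely recalled with a citation to \cite{Xu-Cai-Srivastava-2013}, so there is no ``paper's own proof'' to compare against directly.

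That said, the paper does establish the key sign fact you need, namely that the hypothesis $|\gamma(A-B)-B(m-2)|\ge m-2$ forces $|\gamma(A-B)-B(k-1)|\ge k-1$ for all $2\le k\le m-1$, but it does so (in the proof of Theorem~\ref{niru vasu p1 t001}) by a one-step triangle inequality descent: if $|c-(k-1)B|\ge k-1$ then $|c-(k-2)B|\ge |c-(k-1)B|-|B|\ge (k-1)-1=k-2$. Your concavity argument for $g(t)=|c-Bt|^2-t^2$ is a nice alternative that handles all intermediate $k$ at once and makes the role of $|B|\le 1$ transparent. Once the absolute values are stripped, your telescoping via $R_k=\Pi_{k+1}/((k-1)!)^2$ is exactly the natural way to collapse the sum, and the boundary case $m=2$ is handled correctly.
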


\section{Coefficient estimates}

In this section, we will estimate the modulus of the coefficients of function of the form (\ref{niru vasu p1 e001}), which belong to the class of $\mathcal{S}(\lambda,\gamma,A,B)$ and $\mathcal{K}(\lambda,\gamma,A,B,m,\mu)$. Moreover, the inequalities obtained will be examined in terms of sharpness.

\begin{thm}\label{niru vasu p1 t001}
Let $f\in\mathcal{S}(\lambda,\gamma,A,B)$ be of the form (\ref{niru vasu p1 e001}),
where $\gamma\in\mathbb{C}\setminus\{0\},0\leq\lambda\leq1, -1\leq B < A \leq1$ and
$n\in\mathbb{N}\setminus\{1\}$ be fixed and define $A_k=|\gamma(A-B)-B(k-1)|-(k-1)$.

\begin{enumerate}
\item[(i)] If $A_2\leq{0}$, then
\begin{equation}\label{niru vasu p1 e0020}
|a_n|\leq\frac{|\gamma|(A-B)}{(n-1)(1+\lambda(n-1))}.
\end{equation}

\item[(ii)] If $A_{n-1}\geq {0}$, then
\begin{equation}\label{niru vasu p1 e0022}
|a_n|\leq\frac{\prod_{j=0}^{n-2}|\gamma(A-B)-jB|}{(n-1)!(1+\lambda(n-1))}.
\end{equation}

\item[(iii)] If $A_k\geq {0}$ and $A_{k+1}\leq{0}$ for  $k= 2,3,\ldots,n-2$,  then
\begin{equation}\label{niru vasu p1 e0021}
|a_n|\leq\frac{\prod_{j=0}^{k-1}|\gamma(A-B)-jB|}{(k-1)!(n-1)(1+\lambda(n-1))}.
\end{equation}

\end{enumerate}
The estimates in (\ref{niru vasu p1 e0020}) and (\ref{niru vasu p1 e0022}) are sharp.
\end{thm}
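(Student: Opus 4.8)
\medskip
\noindent\emph{Proof proposal.} The plan is to pass from $f$ to the auxiliary function $g(z):=\lambda zf'(z)+(1-\lambda)f(z)=z+\sum_{n=2}^{\infty}b_nz^n$, where $b_n=(1+\lambda(n-1))a_n$, so that it suffices to bound $|b_n|$. Since membership in $\mathcal{S}(\lambda,\gamma,A,B)$ is phrased through $g$, there is a Schwarz function $\omega$ with
\[
\frac{zg'(z)}{g(z)}=1+\frac{\gamma(A-B)\omega(z)}{1+B\omega(z)}.
\]
Clearing the denominator and using $zg'-g=\sum_{n\ge2}(n-1)b_nz^n$, I would record the functional identity
\[
\sum_{n=2}^{\infty}(n-1)b_nz^n=\omega(z)\sum_{n=1}^{\infty}\bigl(\gamma(A-B)-B(n-1)\bigr)b_nz^n .
\]

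Next I would convert this into a sum-of-squares estimate. Writing $\delta_j:=|\gamma(A-B)-jB|$ (so $\delta_0=|\gamma|(A-B)$) and $A_n=\delta_{n-1}-(n-1)$ as in the statement, I fix the target $N$ and truncate the right-hand factor to degree $N-1$; because $\omega(0)=0$ this leaves unchanged every coefficient of the left side up to order $N$. Comparing $H^2$–norms and using $|\omega|\le1$, Parseval's identity gives
\[
\sum_{n=2}^{N}(n-1)^2|b_n|^2\le\sum_{n=1}^{N-1}\delta_{n-1}^{2}\,|b_n|^2 ,
\]
which, since $|b_1|=1$ and $\delta_{n-1}^2-(n-1)^2=A_n(\delta_{n-1}+(n-1))$, rearranges to the master inequality
\[
(N-1)^2|b_N|^2\le\delta_0^{2}+\sum_{n=2}^{N-1}A_n\bigl(\delta_{n-1}+(n-1)\bigr)|b_n|^2 .
\]
The crucial feature is that the sign of each summand equals the sign of $A_n$.

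The case analysis then rests on a monotonicity observation: from $|\delta_{n}-\delta_{n-1}|\le|B|\le1$ I would deduce that $A_2\le0$ forces $A_n\le0$ for all $n\ge2$, that $A_k\ge0$ forces $A_m\ge0$ for $2\le m\le k$, and that $A_{k+1}\le0$ forces $A_m\le0$ for $m\ge k+1$; hence $(A_n)$ changes sign at most once. In case (i) every summand is nonpositive, so $(n-1)^2|b_n|^2\le\delta_0^2$ and (\ref{niru vasu p1 e0020}) follows after dividing by $1+\lambda(n-1)$. Case (ii) is exactly Theorem~A, whose hypothesis $|\gamma(A-B)-B(n-2)|\ge n-2$ coincides with $A_{n-1}\ge0$. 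For case (iii) I would drop the nonpositive tail $m\ge k+1$, bound the surviving $|b_m|$ $(2\le m\le k)$ by Theorem~A (applicable since $A_{m-1}\ge0$ there), and then invoke Lemma~\ref{niru vasu p1 l001} with its running index equal to $k+1$: this identity collapses
\[
\delta_0^{2}+\sum_{m=2}^{k}A_m\bigl(\delta_{m-1}+(m-1)\bigr)\Bigl(\tfrac{1}{(m-1)!}\textstyle\prod_{j=0}^{m-2}\delta_j\Bigr)^{2}
\quad\text{exactly into}\quad
\Bigl(\tfrac{1}{(k-1)!}\textstyle\prod_{j=0}^{k-1}\delta_j\Bigr)^{2},
\]
whence $(n-1)^2|b_n|^2\le\bigl(\prod_{j=0}^{k-1}\delta_j/(k-1)!\bigr)^2$ and (\ref{niru vasu p1 e0021}) follows.

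Finally, for sharpness of (\ref{niru vasu p1 e0020}) I would take $\omega(z)=z^{n-1}$, giving $g(z)=z(1+Bz^{n-1})^{\gamma(A-B)/((n-1)B)}$ (and $g(z)=z\exp(\gamma A z^{n-1}/(n-1))$ when $B=0$); here $b_m=0$ for $2\le m\le n-1$ and $b_n=\gamma(A-B)/(n-1)$, so equality holds throughout, and $f$ is recovered from the linear equation $g=\lambda zf'+(1-\lambda)f$. Sharpness of (\ref{niru vasu p1 e0022}) is inherited from Theorem~A. I expect the main obstacle to be twofold: carrying out the truncation/Parseval step so that the finite inequality is clean, and—more essentially—recognizing that Lemma~\ref{niru vasu p1 l001} is precisely the telescoping identity that turns the residual weighted sum in case (iii) into the closed-form product; the elementary sign-propagation observation is what guarantees that exactly the right terms are nonnegative for this collapse to apply.
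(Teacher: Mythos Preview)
Your proposal is correct and follows essentially the same route as the paper: the subordination is turned into the coefficient identity $\sum (k-1)b_k z^k=\omega(z)\sum(\gamma(A-B)-B(k-1))b_k z^k$, Parseval with $|\omega|<1$ yields the master inequality, the monotonicity $|\delta_k-\delta_{k-1}|\le|B|\le1$ controls the signs of the $A_k$, and Lemma~\ref{niru vasu p1 l001} (together with the case~(ii) bound) collapses the positive tail in case~(iii). The only cosmetic difference is that you pass explicitly to $g=\lambda zf'+(1-\lambda)f$ and defer case~(ii) to Theorem~A, whereas the paper keeps the $(1+\lambda(k-1))a_k$ factors throughout and reproduces the induction for completeness; the extremal functions you give for sharpness also agree with those in the paper.
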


\begin{pf}
The proof of part (ii) can be found in \cite{Xu-Cai-Srivastava-2013}. But for the sake of completeness of the result, we include it here.
Let $f\in\mathcal{S}(\lambda,\gamma,A,B)$. Then there exists an analytic function $\omega(z)$ in $\mathbb{D}$ with $\omega(0)=0$ and $|\omega(z)|<1 $ such that
\begin{equation}\label{niru vasu p1 e0024}
1+\frac{1}{\gamma}\left(\frac{z (\lambda zf'(z) + (1-\lambda) f(z))'}{(\lambda zf'(z) + (1-\lambda) f(z))}-1\right)= \frac{1 + A\omega(z)}{1 + B\omega(z)}.
\end{equation}
Using the series expansion (\ref{niru vasu p1 e001}) of $f(z)$ in (\ref{niru vasu p1 e0024}) and then after simplification we obtain
\begin{align*}
\sum_{k=2}^{\infty}(k-1)&(1+\lambda (k -1))a_kz^k\\
& = \left(\gamma(A-B)z + \sum_{k=2}^{\infty}(\gamma(A-B)- B(k-1))(1+\lambda (k -1))a_k z^k \right)\omega(z)
\end{align*}
which can be written as
\begin{align*}
\sum_{k=2}^{n}(k-1)&(1+\lambda (k -1))a_kz^k + \sum_{k=n+1}^{\infty}b_kz^k\\
& = \bigg(\gamma(A-B)z + \sum_{k=2}^{n-1}(\gamma(A-B)- B(k-1))(1+\lambda (k -1))a_k z^k \bigg)\omega(z)
\end{align*}
for certain coefficients $b_k$.
Since $|\omega(z)|<1$,   an application of  Parseval's theorem gives
\begin{align*}
\sum_{k= 2}^{n}(k-1)^2&(1+\lambda (k -1))^2|a_k|^2 + \sum_{k=n+1}^{\infty}|b_k|^2\\
& \leq |\gamma|^2(A-B)^2 + \sum_{k=2}^{n-1}\big(|\gamma(A-B)- B(k-1)|^2\big)(1+\lambda (k -1))^2|a_k|^2
\end{align*}
and therefore


\begin{align}\label{niru vasu p1 e0023}
(n-1)^2(1&+\lambda (n -1)^2 )|a_n|^2 \leq|\gamma|^2(A-B)^2\\
& + \sum_{k=2}^{n-1}\bigg(|\gamma(A-B)- B(k-1)|^2 - (k-1)^2\bigg)(1+\lambda (k -1))^2|a_k|^2.\nonumber
\end{align}

For $n=2 $, it follows from (\ref{niru vasu p1 e0023}) that
\begin{equation}\label{niru vasu p1 e0026}
|a_2| \leq \frac {|\gamma|(A-B)}{1+ \lambda}.
\end{equation}
Note that if $A_k\geq 0$ then $A_{k-1}\geq 0$ for $k=2,3,\ldots$, because
$$
|\gamma(A-B)- (k-2)B| \geq |\gamma(A-B)- (k-1)B| -|B| \geq (k-1)-1 =k-2.
$$
Again, if $A_k\leq 0$ then $A_{k+1}\leq 0$ for $k=2,3,\ldots$, because
$$
|\gamma(A-B)- kB| \leq |\gamma(A-B)- (k-1)B| +|B| \leq (k-1)+1 =k.
$$

If $A_2\leq 0$  then from the above discussion we can conclude that $ A_k\leq 0$ for all $k> {2} $. It follows from  (\ref{niru vasu p1 e0023}) that
\begin{equation*}
(n-1)^2(1+\lambda (n -1)^2)|a_n|^2\leq {|\gamma|^2(A-B)^2}
\end{equation*}
and consequently
\begin{equation}\label{niru-vasu- p1- e0026aa}
|a_n|\leq \frac{|\gamma|(A-B)}{(n-1)(1+\lambda(n-1))}.
\end{equation}
Equality in (\ref{niru-vasu- p1- e0026aa})  is attained  for the  functions $f_n(z)$  where $f_n(z)$  satisfies  the
following  differential equation
$$
\lambda zf_n'(z)+ (1-\lambda)f_n(z) = z(1 + Bz^{n-1})^\frac{\gamma(A-B)}{B(n-1)}.
$$

Next, let $A_{n-1}\geq{0}$. Then from the above discussion we have $A_{2},A_{3}, A_{4},\ldots ,A_{n-2}\geq{0}$.
 From (\ref{niru vasu p1 e0026}) it is clear that  (\ref{niru vasu p1 e0022}) is true for $n=2$. Suppose that  (\ref{niru vasu p1 e0022})
 is true for $k = 2, 3,\ldots,n-1$. Then using the induction hypothesis, it follows from (\ref{niru vasu p1 e0023}) that
\begin{align*}
&(n-1)^2(1+\lambda (n -1))^2 |a_n|^2\\
&\leq |\gamma|^2(A-B)^2 + \sum_{k=2}^{n-1}\left(||\gamma(A-B)- B(k-1)|^2 - (k-1)^2| \right)(1+\lambda (k -1))^2|a_k|^2\\
&\leq|\gamma|^2(A-B)^2 + \sum_{k=2}^{n-1}\left(||\gamma(A-B)- B(k-1)|^2 - (k-1)^2| \right)(1+\lambda (k -1))^2\times\\  &\qquad\frac{\prod_{j=0}^{k-2}|\gamma(A-B)-jB|^2}{((k-1)!)^2(1+\lambda(k-1))^2}.
\end{align*}
An application of Lemma \ref{niru vasu p1 l001} shows that
\begin{equation*}
(n-1)^2(1+\lambda (n -1))^2 |a_n|^2\leq\frac{\prod_{j=0}^{n-2}|\gamma(A-B)-jB|^2}{((n-2)!)^2}
\end{equation*}
and consequently,
$$
|a_n|\leq\frac{\prod_{j=0}^{n-2}|\gamma(A-B)-jB|}{((n-1)!)(1+\lambda(n-1))}.
$$
By the mathematical induction, (\ref{niru vasu p1 e0022}) is true for all $n\geq 2$.
The equality in (\ref{niru vasu p1 e0022}) is attained for the following function
$$
f(z)=
\begin{cases}
\frac{\lambda - 1}{\lambda}\int_{0}^{z}\frac{t^{\frac{\lambda - 1}{\lambda}}}{\lambda (1 + Bt)^{\frac{B - A}{B}\gamma}} \,dt & \mbox{ for }\quad B \neq 0, \lambda \neq 0\\[4mm]
\frac{z}{ (1 + Bz)^{\frac{B-A}{B}\gamma}} &\mbox{ for }\quad B \neq 0, \lambda = 0\\[5mm]
\frac{1}{\lambda}\int_{0}^{z}t^{\frac{1 - \lambda}{\lambda}} e^ {A \gamma t} \,dt &\mbox{ for }\quad B = 0, \lambda \neq 0\\[3mm]
z e^{A \gamma z} &\mbox{ for }\quad B = 0, \lambda = 0.
\end{cases}
$$


Now if we assume that $A_k\geq{0}$ and $A_{k+1}\leq{0}$ for $k= 2,3,\ldots,n-2$. Then $A_{2},A_{3}, A_{4},\ldots ,A_{k-1}\geq{0}$ and
$A_{k+2},A_{k+3},\ldots ,A_{n-2}\leq{0}$. Using (\ref{niru vasu p1 e0022}) and Lemma \ref{niru vasu p1 l001}
in (\ref{niru vasu p1 e0023}), we obtain
\begin{align*}
&(n-1)^2(1+\lambda (n -1)^2 )|a_n|^2\\
&\leq |\gamma|^2(A-B)^2 + \sum_{l=2}^{k}\left(||\gamma(A-B)- B(l-1)|^2 - (l-1)^2| \right)(1+\lambda (l -1))^2|a_l|^2\\
&\leq|\gamma|^2(A-B)^2 + \sum_{l=2}^{k}\big(||\gamma(A-B)- B(l-1)|^2 - (l-1)^2| \big)\frac{\prod_{j=0}^{l-2}|\gamma(A-B)-jB|^2}{((l-1)!)^2}\\
&= \frac{\prod_{j=0}^{k -1}|\gamma(A-B)-jB|^2}{((k-1)!)^2},
\end{align*}
from which (\ref{niru vasu p1 e0021}) follows.


\end{pf}

\begin{thm}

Let $f\in\mathcal{K}(\lambda,\gamma,A,B,m,\mu)$ be of the form (\ref{niru vasu p1 e001}) and
$\gamma\in\mathbb{C}\setminus\{0\},0\leq\lambda\leq1,-1\leq B < A \leq1, m\in\mathbb{N}\setminus\{1\}\quad and \quad
\mu\in\mathbb{R}\setminus(-\infty,-1]$. Define $A_k=|\gamma(A-B)-B(k-1)|-(k-1)$.
\begin{enumerate}
\item[(i)]
If $A_2\leq{0}$, then
\begin{equation}\label{niru vasu p1 e0025}
|a_n|\leq\frac{|\gamma|(A-B)}{(n-1)(1+(n-1)\lambda)}\frac{\prod_{j=0}^{m-1}(\mu + j+ 1)}{\prod_{j=0}^{m-1}(\mu+j+n)}.
\end{equation}

\item[(ii)]

If $A_{n-1} \geq {0}$, then

\begin{equation}\label{niru vasu p1 e0030}
|a_n|\leq\frac{\prod_{j=0}^{n-2}|\gamma(A-B) - jB|}{(n-1)!(1 + \lambda(n-1))}\frac{\prod_{j=0}^{m-1}(\mu + j +1)}{\prod_{j=0}^{m-1}(\mu + j+n)}.
\end{equation}

\item[(iii)]

If $A_k\geq{0} \quad and \quad A_{k+1}\leq{0} \quad for\quad  k= 2, 3,\ldots,n-2$, then

\begin{equation}\label{niru vasu p1 e0035}
|a_n|\leq\frac{\prod_{j=0}^{k-1}|\gamma(A-B) - jB|}{(n-1)(k-1)!(1 + \lambda(n-1))}\frac{\prod_{j=0}^{m-1}(\mu + j +1)}{\prod_{j=0}^{m-1}(\mu + j+n)}.
\end{equation}

\end{enumerate}
The estimates in (\ref{niru vasu p1 e0025}) and (\ref{niru vasu p1 e0030}) are sharp.
\end{thm}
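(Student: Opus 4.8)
The plan is to reduce the problem to Theorem~\ref{niru vasu p1 t001} by exploiting the Cauchy--Euler differential equation (\ref{niru vasu p1 e0014}) that links $f\in\mathcal{K}(\lambda,\gamma,A,B,m,\mu)$ to its associated function $g\in\mathcal{S}(\lambda,\gamma,A,B)$. Write $g(z)=z+\sum_{n=2}^{\infty}b_n z^n$ and $f(z)=z+\sum_{n=2}^{\infty}a_n z^n$. The key observation is that the differential operator on the left-hand side of (\ref{niru vasu p1 e0014}) is precisely the $m$-fold composition $\prod_{j=0}^{m-1}(\theta+\mu+j)$, where $\theta=z\,d/dz$ is the Euler operator. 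First I would establish this factorization, for instance by induction on $m$, or more directly by checking that both sides act identically on each monomial: since $z^{k}\frac{d^{k}}{dz^{k}}z^{n}=n(n-1)\cdots(n-k+1)\,z^{n}$ and $\theta z^{n}=n z^{n}$, both operators send $z^{n}$ to $\prod_{j=0}^{m-1}(n+\mu+j)\,z^{n}$.

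Having this, I would apply the operator term by term to the series for $f$ and compare with the right-hand side $g(z)\prod_{j=0}^{m-1}(\mu+j+1)$. Matching the coefficient of $z^{n}$ for $n\geq 2$ yields the clean relation
\begin{equation*}
a_{n}\prod_{j=0}^{m-1}(\mu+j+n)=b_{n}\prod_{j=0}^{m-1}(\mu+j+1),
\end{equation*}
so that, because $\mu\in\mathbb{R}\setminus(-\infty,-1]$ forces every factor to be positive,
\begin{equation*}
|a_{n}|=|b_{n}|\,\frac{\prod_{j=0}^{m-1}(\mu+j+1)}{\prod_{j=0}^{m-1}(\mu+j+n)}.
\end{equation*}
At this point the three estimates (\ref{niru vasu p1 e0025}), (\ref{niru vasu p1 e0030}) and (\ref{niru vasu p1 e0035}) follow immediately by inserting the three corresponding bounds on $|b_{n}|$ supplied by Theorem~\ref{niru vasu p1 t001}. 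Note that the sign conditions on $A_{2}$, on $A_{n-1}$, and on the pair $A_{k},A_{k+1}$ are stated entirely in terms of the quantities $A_{k}=|\gamma(A-B)-B(k-1)|-(k-1)$, which depend only on the subordination data and are therefore inherited verbatim from the $\mathcal{S}(\lambda,\gamma,A,B)$ situation.

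For sharpness of (\ref{niru vasu p1 e0025}) and (\ref{niru vasu p1 e0030}), I would take $g$ to be the extremal function exhibited in the corresponding cases of Theorem~\ref{niru vasu p1 t001} and let $f$ be the unique normalized solution of (\ref{niru vasu p1 e0014}) with that choice of $g$; the coefficient identity above then propagates equality from $|b_{n}|$ to $|a_{n}|$, so the asserted bound is attained. As in Theorem~\ref{niru vasu p1 t001}, one does not expect sharpness in case~(iii), which is why only (\ref{niru vasu p1 e0025}) and (\ref{niru vasu p1 e0030}) are claimed sharp.

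The step I expect to be the main obstacle is the operator factorization: one must verify that the binomial coefficients $\binom{m}{k}$ together with the products $\prod_{i}(\mu+m-k+i)$ appearing in (\ref{niru vasu p1 e0014}) are exactly those produced by expanding $\prod_{j=0}^{m-1}(\theta+\mu+j)$ in terms of the operators $z^{k}\,d^{k}/dz^{k}$. Equivalently, one must check the numerical identity obtained by evaluating the left-hand operator on $z^{n}$, namely that the resulting weighted sum collapses to $\prod_{j=0}^{m-1}(\mu+j+n)$. Once this bookkeeping identity is settled, the remainder of the argument is a routine transfer of both the estimates and the extremal functions from Theorem~\ref{niru vasu p1 t001}.
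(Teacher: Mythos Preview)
Your proposal is correct and follows essentially the same route as the paper: both arguments compare coefficients in the Cauchy--Euler equation (\ref{niru vasu p1 e0014}) to obtain $a_n=\bigl(\prod_{j=0}^{m-1}(\mu+j+1)\big/\prod_{j=0}^{m-1}(\mu+j+n)\bigr)b_n$ and then invoke Theorem~\ref{niru vasu p1 t001}, with sharpness inherited from the extremal functions there. Your explicit factorization of the left-hand side as $\prod_{j=0}^{m-1}(\theta+\mu+j)$ with $\theta=z\,d/dz$ is a clean way to justify the coefficient identity that the paper simply asserts.
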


\begin{pf}
Let $f\in\mathcal{K}(\lambda,\gamma,A,B,m,\mu)$ be of the form (\ref{niru vasu p1 e001}). Then there exists $g\in\mathcal{S}(\lambda,\gamma,A,B)$
of the form  $g(z) = z + \sum_{n=2}^{\infty}b_nz^n$ such that (\ref{niru vasu p1 e0014}) holds. By comparing the coefficients on both sides of
(\ref{niru vasu p1 e0014}), we obtain

 $$
 a_n = \left(\frac{\prod_{j=0}^{m-1}(\mu + j + 1)}{\prod_{j=0}^{m-1}(\mu + j + n)}\right)b_n,
 $$
 where $m,n\in\mathbb{N}\setminus\{1\}$ and $\mu\in\mathbb{R}\setminus(-\infty , -1]$. Then the desired results follow from Theorem \ref{niru vasu p1 t001}.
 The sharpness of (\ref{niru vasu p1 e0025}) and (\ref{niru vasu p1 e0030}) easily follow from the sharpness of  (\ref{niru vasu p1 e0020}) and
  (\ref{niru vasu p1 e0022}).
%
\end{pf}

\begin{cor}
Let $f\in\mathcal{S}c(\gamma,\lambda,\beta)$ be given by (\ref{niru vasu p1 e001}).
\begin{enumerate}
\item[(i)]
If
$|2\gamma(1- \beta) + 1 |\leq 1$, then
\begin{equation}\label{niru vasu p1 e040}
|a_n|\leq\frac{2|\gamma|(1 - \beta)}{(n-1)(1 + (n-1)\lambda)}.
\end{equation}
The equality in (\ref{niru vasu p1 e040}) occurs for the solution of equation
\begin{equation*}
\lambda zf_n'(z)+ (1-\lambda)f_n(z) = z(1 - z^{n-1})^\frac{-2\gamma(1 - \beta)}{(n-1)}.
\end{equation*}

\item[(ii)] If
$|2\gamma(1-\beta)+(n-2)|\geq (n-2)$, then
\begin{equation}\label{niru vasu p1 e045}
|a_n|\leq\frac{\prod_{j=0}^{n-2}|2\gamma(1 - \beta)+j|}{(n-1)!(1 + (n-1)\lambda)}.
\end{equation}
The inequality (\ref{niru vasu p1 e045})  is  sharp.

\item[(iii)] If
$|2\gamma(1-\beta)+(k-1)|\geq (k-1)$, then
$$
|a_n|\leq\frac{\prod_{j=0}^{k-1}|2\gamma(1 - \beta)+j|}{(n-1)(k-1)!(1 + (n-1)\lambda)}.
$$
\end{enumerate}

%
%
\end{cor}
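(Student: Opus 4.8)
The plan is to read this corollary as a direct specialization of Theorem \ref{niru vasu p1 t001}, so that no genuinely new analytic work is needed beyond a careful bookkeeping of parameters. The starting point is the identification already recorded in the introduction, namely that $\mathcal{S}c(\gamma,\lambda,\beta) = \mathcal{S}(\lambda,\gamma,1-2\beta,-1)$. Thus a function $f\in\mathcal{S}c(\gamma,\lambda,\beta)$ is exactly a function in $\mathcal{S}(\lambda,\gamma,A,B)$ with the choices $A=1-2\beta$ and $B=-1$, and these values may be fed straight into Theorem \ref{niru vasu p1 t001}.

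First I would compute the relevant quantities under this substitution. With $A=1-2\beta$ and $B=-1$ one has $A-B=2(1-\beta)$, hence $\gamma(A-B)=2\gamma(1-\beta)$ and, for each $j$, $\gamma(A-B)-jB=2\gamma(1-\beta)+j$. Consequently the quantity $A_k=|\gamma(A-B)-B(k-1)|-(k-1)$ appearing in Theorem \ref{niru vasu p1 t001} becomes $A_k=|2\gamma(1-\beta)+(k-1)|-(k-1)$. Reading off the three hypotheses one sees that $A_2\leq 0$ is precisely $|2\gamma(1-\beta)+1|\leq 1$; that $A_{n-1}\geq 0$ is $|2\gamma(1-\beta)+(n-2)|\geq (n-2)$; and that $A_k\geq 0$ is $|2\gamma(1-\beta)+(k-1)|\geq (k-1)$, matching the three cases of the corollary. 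In part (iii) the index $k$ is to be understood as the transition point where additionally $A_{k+1}\leq 0$, exactly as in the parent theorem.

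Next I would substitute these same values into the three conclusions of Theorem \ref{niru vasu p1 t001}. In (\ref{niru vasu p1 e0020}) the factor $|\gamma|(A-B)$ becomes $2|\gamma|(1-\beta)$, which yields (\ref{niru vasu p1 e040}); in (\ref{niru vasu p1 e0022}) the product $\prod_{j=0}^{n-2}|\gamma(A-B)-jB|$ becomes $\prod_{j=0}^{n-2}|2\gamma(1-\beta)+j|$, which yields (\ref{niru vasu p1 e045}); and the third estimate follows in the same way from (\ref{niru vasu p1 e0021}). For the equality assertion in part (i), I would specialize the extremal function of Theorem \ref{niru vasu p1 t001}: the differential equation $\lambda zf_n'(z)+(1-\lambda)f_n(z)=z(1+Bz^{n-1})^{\gamma(A-B)/(B(n-1))}$ with $B=-1$ and $A-B=2(1-\beta)$ turns the exponent $\gamma(A-B)/(B(n-1))$ into $-2\gamma(1-\beta)/(n-1)$, reproducing the stated extremal equation. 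Sharpness of the bound in part (ii) is inherited directly from the sharpness already asserted for (\ref{niru vasu p1 e0022}) in Theorem \ref{niru vasu p1 t001}.

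There is essentially no conceptual obstacle here; the entire content is the parameter dictionary $A=1-2\beta$, $B=-1$. The only points requiring care are of a bookkeeping nature: keeping the sign of $B=-1$ straight so that $-jB=+j$ and so that the exponent $\gamma(A-B)/(B(n-1))$ picks up the correct minus sign, and remembering in part (iii) that the hypothesis as written ($A_k\geq 0$) is to be read together with the transition condition $A_{k+1}\leq 0$ inherited from Theorem \ref{niru vasu p1 t001}. Once these substitutions are verified, all three estimates together with both sharpness claims follow immediately.
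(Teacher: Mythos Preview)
Your proposal is correct and matches the paper's approach: the corollary is stated without proof precisely because it is obtained from Theorem~\ref{niru vasu p1 t001} by the specialization $A=1-2\beta$, $B=-1$ via the identification $\mathcal{S}c(\gamma,\lambda,\beta)=\mathcal{S}(\lambda,\gamma,1-2\beta,-1)$, and your parameter bookkeeping (including the extremal function and the sharpness claims) is accurate.
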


\begin{cor}
Let $f\in\mathcal{B}(\gamma,\lambda,\beta,\mu)$ be given by (\ref{niru vasu p1 e001}).
\begin{enumerate}
\item[(i)]
If
 $|2\gamma(1- \beta) + 1 |\leq{1}$,
then
\begin{equation}\label{niru vasu p1 e050}
|a_n|\leq\frac{2|\gamma|(1-\beta)}{(n-1)(1+(n-1)\lambda)}\frac{(\mu+1)(\mu+2)}{(\mu+n)(\mu+n+1)}.
\end{equation}
The inequality (\ref{niru vasu p1 e050})  is  sharp.

\item[(ii)]If
$|2\gamma(1-\beta)+(n-2)|\geq (n-2)$, then
\begin{equation}\label{niru vasu p1 e055}
|a_n|\leq\frac{\prod_{j=0}^{n-2}|2\gamma(1 - \beta)+j|}{(n-1)!(1 + (n-1)\lambda)}\frac{(\mu+1)(\mu+2)}{(\mu+n)(\mu+n+1)}.
\end{equation}
The inequality (\ref{niru vasu p1 e055})  is  sharp.

\item[(iii)]If
 $|2\gamma(1-\beta)+(k-1)|\geq (k-1)$, then
$$
|a_n|\leq\frac{\prod_{j=0}^{k-1}|2\gamma(1 - \beta)+j|}{(n-1)(k-1)!(1 + (n-1)\lambda)}\frac{(\mu+1)(\mu+2)}{(\mu+n)(\mu+n+1)}.
$$
\end{enumerate}
\end{cor}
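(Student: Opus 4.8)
The plan is to recognize $\mathcal{B}(\gamma,\lambda,\beta,\mu)$ as the special case of the class $\mathcal{K}(\lambda,\gamma,A,B,m,\mu)$ obtained by taking $m=2$ together with $A=1-2\beta$ and $B=-1$, and then to read off the three estimates from the preceding theorem for $\mathcal{K}$. First I would fix $g(z)=z+\sum_{n=2}^{\infty}b_nz^n\in\mathcal{S}c(\gamma,\lambda,\beta)$ and let $w=f$ satisfy the defining non-homogeneous Cauchy--Euler equation, and then compare the coefficients of $z^n$ on the two sides. The left-hand differential operator sends $z^n$ to a scalar multiple of $z^n$, so this comparison yields the relation $a_n=\dfrac{(\mu+1)(\mu+2)}{(\mu+n)(\mu+n+1)}\,b_n$, which is precisely the $m=2$ instance of the coefficient identity $a_n=\bigl(\prod_{j=0}^{m-1}(\mu+j+1)\big/\prod_{j=0}^{m-1}(\mu+j+n)\bigr)b_n$ used in the proof of the theorem for $\mathcal{K}$. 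Getting this scalar to come out exactly as $\frac{(\mu+1)(\mu+2)}{(\mu+n)(\mu+n+1)}$ is the one step that genuinely needs care, since the whole $\mu$-dependent factor in the corollary rests on it.

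Next I would translate the remaining parameters. Since $\mathcal{S}c(\gamma,\lambda,\beta)=\mathcal{S}(\lambda,\gamma,1-2\beta,-1)$, one has $A-B=(1-2\beta)-(-1)=2(1-\beta)$, hence $|\gamma|(A-B)=2|\gamma|(1-\beta)$ and $\gamma(A-B)-jB=2\gamma(1-\beta)+j$, so that $|\gamma(A-B)-jB|=|2\gamma(1-\beta)+j|$. Consequently the quantities $A_k=|\gamma(A-B)-B(k-1)|-(k-1)$ of Theorem \ref{niru vasu p1 t001} become $A_k=|2\gamma(1-\beta)+(k-1)|-(k-1)$, and the hypotheses of the three parts convert verbatim into the stated conditions: $A_2\le 0$ is $|2\gamma(1-\beta)+1|\le 1$, the condition $A_{n-1}\ge 0$ is $|2\gamma(1-\beta)+(n-2)|\ge n-2$, and the transition condition $A_k\ge 0$ (taken at the index where also $A_{k+1}\le 0$) is $|2\gamma(1-\beta)+(k-1)|\ge k-1$.

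Finally, for $m=2$ the two products collapse to $\prod_{j=0}^{1}(\mu+j+1)=(\mu+1)(\mu+2)$ and $\prod_{j=0}^{1}(\mu+j+n)=(\mu+n)(\mu+n+1)$. Substituting these and the translated parameters into parts (i), (ii) and (iii) of the theorem for $\mathcal{K}$ reproduces the three displayed bounds of the corollary. For sharpness in parts (i) and (ii), I would take as $g$ the extremal functions already found for the sharp bounds in the class $\mathcal{S}c(\gamma,\lambda,\beta)$ and solve the Cauchy--Euler equation for the corresponding $w=f$; the coefficient relation above forces $|a_n|$ to meet the quoted bound, so $f$ is extremal in $\mathcal{B}(\gamma,\lambda,\beta,\mu)$, exactly as in the sharpness argument for $\mathcal{K}$.
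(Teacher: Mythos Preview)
Your proposal is correct and follows exactly the approach intended by the paper: the corollary is stated without proof because it is the immediate specialization of the preceding theorem for $\mathcal{K}(\lambda,\gamma,A,B,m,\mu)$ obtained by taking $m=2$, $A=1-2\beta$, $B=-1$ (so that $\mathcal{S}c(\gamma,\lambda,\beta)=\mathcal{S}(\lambda,\gamma,1-2\beta,-1)$ and $\mathcal{B}(\gamma,\lambda,\beta,\mu)=\mathcal{K}(\lambda,\gamma,1-2\beta,-1,2,\mu)$). Your parameter translations, the coefficient relation $a_n=\frac{(\mu+1)(\mu+2)}{(\mu+n)(\mu+n+1)}b_n$, and the sharpness argument via the extremal $g$ all match the paper's framework.
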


The following two results give the sharp coefficient bounds for functions in the classes $\mathcal{S}^*(\gamma)$ and $\mathcal{C}(\gamma)$ under
some assumptions.

\begin{cor}\label{niru-vasu cor1}
 Let $f\in \mathcal{S}^*(\gamma)$ be given by (\ref{niru vasu p1 e001}).
\begin{enumerate}
\item[(i)]
If  $|2\gamma + 1 |\leq{1}$, then
\begin{equation}\label{niru vasu p1 e060}
|a_n|\leq \frac{2|\gamma|}{n-1}.
\end{equation}
The equality in (\ref{niru vasu p1 e060}) occurs for the functions $f_n(z)$ where $f_n(z)$ is defined by
\begin{equation*}
f_n(z) = z(1 - z^{n-1})^\frac{-2\gamma}{(n-1)}.
\end{equation*}

\item[(ii)] If
$|2\gamma +(n-2)|\geq (n-2)$, then
\begin{equation}\label{niru vasu p1 e065}
|a_n|\leq \frac{\prod_{j=0}^{n-2}|2\gamma + j|}{(n-1)!}.
\end{equation}
The inequality (\ref{niru vasu p1 e065}) is sharp for the function $f(z)$ where $f(z)$ is defined by
\begin{equation*}
f(z) = \frac{z}{(1-z)^{2\gamma}}.
\end{equation*}
\end{enumerate}
\end{cor}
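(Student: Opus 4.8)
The plan is to obtain both parts as the single special case $\lambda=0$, $A=1$, $B=-1$ of Theorem \ref{niru vasu p1 t001}. As recorded in the introduction, $\mathcal{S}^*(\gamma)=\mathcal{S}(0,\gamma,1,-1)$, so every $f\in\mathcal{S}^*(\gamma)$ falls under the hypotheses of that theorem with these parameter values; they are admissible because $-1\le B<A\le 1$ and $0\le\lambda\le 1$ hold. Thus no new estimate has to be proved from scratch: the entire task is to substitute the parameters and check that the hypotheses and the extremal functions of Theorem \ref{niru vasu p1 t001} reduce to the forms stated in the corollary.

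First I would carry out the substitution inside the quantities appearing in Theorem \ref{niru vasu p1 t001}. With $A=1$, $B=-1$ one finds $A-B=2$, $\gamma(A-B)-jB=2\gamma+j$, and hence $A_k=|\gamma(A-B)-B(k-1)|-(k-1)=|2\gamma+(k-1)|-(k-1)$. In particular the defining condition of part (i), namely $A_2\le 0$, becomes $|2\gamma+1|\le 1$, and the defining condition of part (ii), namely $A_{n-1}\ge 0$, becomes $|2\gamma+(n-2)|\ge(n-2)$; these are exactly the hypotheses of the corollary. For part (i) I would then invoke (\ref{niru vasu p1 e0020}): with $\lambda=0$ and $A-B=2$ its right-hand side collapses to $2|\gamma|/(n-1)$, which is (\ref{niru vasu p1 e060}). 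For part (ii) I would invoke (\ref{niru vasu p1 e0022}): replacing $\gamma(A-B)-jB$ by $2\gamma+j$ and setting $\lambda=0$ turns its right-hand side into $\prod_{j=0}^{n-2}|2\gamma+j|/(n-1)!$, which is (\ref{niru vasu p1 e065}).

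The only place requiring any care is the verification of sharpness. Since (\ref{niru vasu p1 e0020}) and (\ref{niru vasu p1 e0022}) are sharp in Theorem \ref{niru vasu p1 t001}, it suffices to check that the extremal functions recorded there reduce to the claimed forms. For part (i) the extremal $f_n$ solves $\lambda zf_n'(z)+(1-\lambda)f_n(z)=z(1+Bz^{n-1})^{\gamma(A-B)/(B(n-1))}$; putting $\lambda=0$, $A=1$, $B=-1$ collapses the left side to $f_n(z)$ and the right side to $z(1-z^{n-1})^{-2\gamma/(n-1)}$, exactly the stated extremal. For part (ii) the relevant branch $B\ne 0$, $\lambda=0$ of the four-case extremal function gives $f(z)=z/(1+Bz)^{(B-A)\gamma/B}=z/(1-z)^{2\gamma}$, again as claimed. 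I expect this bookkeeping—confirming that the two sign conditions on $A_k$ translate precisely into the displayed inequalities on $\gamma$, and that the exponents and bases of the extremal functions specialize correctly—to be the main (and essentially the only) obstacle, and it is entirely routine; everything else is immediate from Theorem \ref{niru vasu p1 t001}.
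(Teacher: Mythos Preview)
Your proposal is correct and matches the paper's approach: the corollary is stated without proof precisely because it is the specialization $\lambda=0$, $A=1$, $B=-1$ of Theorem~\ref{niru vasu p1 t001}, using the identification $\mathcal{S}^*(\gamma)=\mathcal{S}(0,\gamma,1,-1)$ recorded in the introduction. Your verification that the conditions $A_2\le 0$ and $A_{n-1}\ge 0$ become $|2\gamma+1|\le 1$ and $|2\gamma+(n-2)|\ge(n-2)$, and that the bounds and extremal functions specialize as claimed, is exactly the bookkeeping the paper leaves to the reader.
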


\begin{cor}\label{niru-vasu cor2}

Let $f\in\mathcal{C}(\gamma)$ be given by (\ref{niru vasu p1 e001}).
\begin{enumerate}
\item[(i)]
If $|2\gamma + 1 |\leq{1}$, then
\begin{equation}\label{niru vasu p1 e070}
|a_n|\leq \frac{2|\gamma|}{n (n-1)}.
\end{equation}
The equality in (\ref{niru vasu p1 e070}) occurs for the functions $f_n(z)$ where $f_n(z)$ is defined by
\begin{equation*}
f'_n(z) = (1 - z^{n-1})^\frac{-2\gamma}{ (n-1)}.
\end{equation*}

\item[(ii)]
If $|2\gamma + (n-2)|\geq (n-2)$, then
\begin{equation}\label{niru vasu p1 e075}
|a_n|\leq \frac{\prod_{j=0}^{n-2}|2\gamma + j|}{n!}.
\end{equation}
The inequality (\ref{niru vasu p1 e075}) is sharp for the function $f(z)$ where $f(z)$ is defined by
\begin{equation*}
f(z)= \int_{0}^{z}\frac{dt}{(1-t)^{2\gamma}}.
\end{equation*}
\end{enumerate}
\end{cor}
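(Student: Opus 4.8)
The plan is to obtain this corollary as a direct specialization of Theorem \ref{niru vasu p1 t001}. The key observation, already recorded in the introduction, is that $\mathcal{C}(\gamma) = \mathcal{S}(1,\gamma,1,-1)$: a function lies in $\mathcal{C}(\gamma)$ exactly when it belongs to $\mathcal{S}(\lambda,\gamma,A,B)$ with $\lambda = 1$, $A = 1$ and $B = -1$. So I would simply substitute these three values into the three conclusions of Theorem \ref{niru vasu p1 t001} and simplify.

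First I would record what the relevant quantities become under $\lambda = 1$, $A = 1$, $B = -1$. Here $A - B = 2$, so $\gamma(A-B) - jB = 2\gamma + j$ and the defining quantity is $A_k = |2\gamma + (k-1)| - (k-1)$. Hence the hypothesis $A_2 \le 0$ is precisely $|2\gamma + 1|\le 1$ and $A_{n-1}\ge 0$ is precisely $|2\gamma + (n-2)|\ge (n-2)$, which are the conditions appearing in parts (i) and (ii). Moreover $1 + \lambda(n-1) = n$, so the denominator $(n-1)(1+\lambda(n-1))$ of (\ref{niru vasu p1 e0020}) becomes $n(n-1)$ and the denominator $(n-1)!\,(1+\lambda(n-1))$ of (\ref{niru vasu p1 e0022}) becomes $n!$. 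Substituting into (\ref{niru vasu p1 e0020}) and (\ref{niru vasu p1 e0022}) then yields exactly (\ref{niru vasu p1 e070}) and (\ref{niru vasu p1 e075}); part (iii) follows in the same way from (\ref{niru vasu p1 e0021}).

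For the extremal functions I would transport those of Theorem \ref{niru vasu p1 t001} under the same substitution. In part (i) the extremizer satisfies $\lambda z f_n'(z) + (1-\lambda) f_n(z) = z(1 + Bz^{n-1})^{\gamma(A-B)/(B(n-1))}$; at $\lambda = 1$, $A = 1$, $B = -1$ the left-hand side collapses to $z f_n'(z)$ and the exponent to $-2\gamma/(n-1)$, giving $f_n'(z) = (1 - z^{n-1})^{-2\gamma/(n-1)}$, as stated. For part (ii) the equality function is, on the branch $B \neq 0$, determined by $\lambda z f'(z) + (1-\lambda) f(z) = z/(1 + Bz)^{(B-A)\gamma/B}$, which at the same parameter values becomes $f'(z) = (1-z)^{-2\gamma}$, i.e. $f(z) = \int_{0}^{z} dt/(1-t)^{2\gamma}$.

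The single point that needs care—and the main, though mild, obstacle—is that the closed piecewise formula for the extremal function in Theorem \ref{niru vasu p1 t001}(ii) carries a prefactor $(\lambda-1)/\lambda$ which vanishes at $\lambda = 1$, so the $\mathcal{C}(\gamma)$ extremizer cannot be read off by naive substitution into that formula; instead I would recover it directly from the governing differential equation, as above. As an independent check of both the bounds and the extremizers, I would invoke the Alexander-type relation $f \in \mathcal{C}(\gamma) \iff zf'(z) \in \mathcal{S}^*(\gamma)$: writing $zf'(z) = z + \sum_{n\ge 2} n a_n z^n$ identifies $a_n$ with $b_n/n$, where $b_n$ is the $n$-th coefficient of a function in $\mathcal{S}^*(\gamma)$, so dividing the bounds of Corollary \ref{niru-vasu cor1} by $n$ reproduces (\ref{niru vasu p1 e070}) and (\ref{niru vasu p1 e075}).
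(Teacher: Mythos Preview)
Your proposal is correct and follows exactly the approach implicit in the paper: the corollary is stated without proof and is intended as the direct specialization $\mathcal{C}(\gamma)=\mathcal{S}(1,\gamma,1,-1)$ of Theorem~\ref{niru vasu p1 t001}, which is precisely what you carry out. Your observation about the $(\lambda-1)/\lambda$ prefactor and the Alexander-type check go slightly beyond what the paper records, but they are consistent with and confirm the intended argument.
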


It is interesting to note that if we choose $\gamma = 1-\beta$ in Corollaries \ref{niru-vasu cor1} and \ref{niru-vasu cor2} then we can obtain the sharp
coefficient bounds for  functions in the classes $\mathcal{M}(\beta)$ and $\mathcal{N}(\beta)$. In fact these results extend the results  obtained
by Firoz Ali and Vasudevarao \cite{Firoz-Vasudeva-2015}.

\section{Application of Jack Lemma}

In $1999$, Silverman \cite{Silverman-1999} investigated the class $\mathcal{G}_{b}$ for $0<b\leq 1$ which involves the quotient of analytic representations of convexity and starlikeness of a function.
More precisely, for $0 < b \leq 1$, consider the following class

$$
\mathcal{G}_{b} : = \left\{ f \in \mathcal{A} : \left|\frac{1 + zf''(z)/f(z)}{zf'(z)/f(z)} - 1 \right| \leq b\quad\mbox{ for } z \in \mathbb{D}\right\}.
$$
It was proved \cite{Silverman-1999}  that $\mathcal{G}_{b} \subset \mathcal{S}^*(2 /{(1 +\sqrt{1 + 8b})})$.  In 2000, Obradovi\'{c} and Tuneski \cite{Obradovic-Tuneski-2000} improved this result by showing $\mathcal{G}_{b}  = S^* [ 0, -b]\subset S^*(2 /{(1 +\sqrt{1 + 8b})})$. In 2003, Tuneski \cite{Tuneski-2003} found
a nice relation among  $A, B$ and $b$ so  that   functions $f$ in the class  $\mathcal{G}_{b}$  also belong to the class $S^*[A, B]$.
In this paper, we prove a sufficient condition for function $f \in\mathcal{G}_{b}$ to be in the class $ \mathcal{SP}(\alpha)$.

The following lemma, known as Jack lemma, is helpful in proving for our main results.
\begin{lem}\label{niru vasu p1c l001} \cite{Jack-1971}
Let $\omega$ be a non-constant analytic function in the unit disk $\mathbb{D}$ with $\omega(0) = 0$. If  $|\omega(z)|$ attains its maximum value on the circle $|z| = r$ at the point $z_0$ then $z_0 {\omega}'(z_0) = k_0 \omega(z_0)$ and $k_0 \geq 1$.
\end{lem}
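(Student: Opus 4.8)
The plan is to reduce everything to two one-variable monotonicity arguments at the boundary point $z_0$, one in the angular direction and one in the radial direction. Write $M = |\omega(z_0)|$. Since $\omega$ is non-constant and $\omega(0)=0$, the maximum modulus principle shows that $M$ is in fact the maximum of $|\omega|$ on the whole closed disc $\{|z|\le r\}$, that $M>0$, and hence $\omega(z_0)\ne 0$. Because $k_0 := z_0\omega'(z_0)/\omega(z_0) = z_0\omega'(z_0)\overline{\omega(z_0)}/M^2$, it suffices to prove that the quantity $W := z_0\omega'(z_0)\overline{\omega(z_0)}$ is real and satisfies $W\ge M^2$. A preliminary normalisation I would record at the outset: the map $h(\zeta) = \omega(r\zeta)/M$ sends $\mathbb{D}$ into $\mathbb{D}$ with $h(0)=0$, so Schwarz's lemma gives the comparison estimate $|\omega(z)|\le (M/r)|z|$ for $|z|\le r$, with equality at $z=z_0$.

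For the reality of $W$ I would differentiate $g(\theta) := |\omega(re^{i\theta})|^2$ along the circle $|z|=r$. A direct computation gives $g'(\theta) = -2\,\operatorname{Im}\!\big(z\omega'(z)\overline{\omega(z)}\big)$ with $z = re^{i\theta}$. Since by hypothesis $g$ attains its maximum at $\theta_0$, where $z_0 = re^{i\theta_0}$, the first-order condition $g'(\theta_0)=0$ forces $\operatorname{Im}(W)=0$, i.e. $W\in\mathbb{R}$. This already shows $k_0$ is real; it remains to pin down its size.

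For the inequality $W\ge M^2$ --- equivalently $k_0\ge 1$ --- I would exploit the radial direction together with the Schwarz comparison above. Consider $\chi(\rho) := (M^2/r^2)\rho^2 - |\omega(\rho e^{i\theta_0})|^2$ for $\rho\in[0,r]$. The Schwarz estimate makes $\chi\ge 0$ on $[0,r]$, while $\chi(r)=0$; hence $\rho=r$ is a left-hand minimum of $\chi$ and $\chi'(r)\le 0$. Computing $\tfrac{d}{d\rho}|\omega(\rho e^{i\theta_0})|^2 = (2/\rho)\operatorname{Re}\!\big(z\omega'(z)\overline{\omega(z)}\big)$ and evaluating at $\rho=r$ turns $\chi'(r)\le 0$ into $\operatorname{Re}(W)\ge M^2$. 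Combined with $\operatorname{Im}(W)=0$ from the previous step, this gives $W = \operatorname{Re}(W)\ge M^2$, whence $k_0 = W/M^2$ is real and $\ge 1$, as claimed.

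The main obstacle --- and the reason I would insist on the Schwarz comparison rather than a bare monotonicity argument --- is obtaining the sharp constant $1$ rather than merely $k_0\ge 0$. Using only that $|\omega|$ is maximal at $z_0$ in the radial direction yields $\operatorname{Re}(W)\ge 0$, i.e. $k_0\ge 0$, which is too weak; the strict improvement to $\ge M^2$ comes precisely from comparing $|\omega(\rho e^{i\theta_0})|$ against the extremal Schwarz profile $(M/r)\rho$, to which it is tangent at $\rho=r$. An alternative I would keep in reserve is to invoke the boundary Schwarz (Julia--Wolff--Carath\'eodory) lemma for $h$ directly, since $\zeta_0 h'(\zeta_0)/h(\zeta_0) = z_0\omega'(z_0)/\omega(z_0)$ with $\zeta_0 = z_0/r$; but the two elementary calculus arguments above are self-contained and avoid appealing to that machinery.
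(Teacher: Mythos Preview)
Your argument is correct. The paper, however, does not supply its own proof of this lemma: it is quoted as Jack's lemma with a citation to \cite{Jack-1971} and then used as a tool. So there is no in-paper proof to compare against.

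For what it is worth, your route is essentially the classical one. The angular first-order condition $g'(\theta_0)=0$ giving $\operatorname{Im}\big(z_0\omega'(z_0)\overline{\omega(z_0)}\big)=0$ is standard, and your use of the Schwarz comparison $|\omega(z)|\le (M/r)|z|$ to upgrade the radial conclusion from $k_0\ge 0$ to $k_0\ge 1$ is exactly the key point in Jack's original proof (he phrases it via $\omega(z)/z$ rather than $\chi(\rho)$, but the content is identical). Your alternative via the boundary Schwarz lemma for $h(\zeta)=\omega(r\zeta)/M$ is also a legitimate shortcut and is the way many modern references package the result. One minor remark: when you apply Schwarz's lemma to $h$, you implicitly use that $|h|<1$ on $\mathbb{D}$ rather than merely $|h|\le 1$; this follows from the open mapping theorem (or maximum modulus) since $h(0)=0$, but it is worth a word.
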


The recent applications of Jack lemma we refer to \cite{Mateljevic, Orneck}.
Using the above Jack lemma we prove the following lemma.

\begin{lem}\label{niru vasu p1c l005}
Let  $p$ be an analytic function in the unit disk $\mathbb{D}$ with $p(0) = 1$ and $A = e^{-2i\alpha}$ be a complex constant with $|\alpha|< \pi/2.$  If $p$ satisfies the following condition

\begin{equation}\label{niru vasu p1c e005}
\frac{z p'(z)}{p^2(z)}\prec \frac{(A + 1)z}{(1 + Az)^2} := h_1(z) \quad \mbox{for} \quad z\in\mathbb{D}
\end{equation}
then
\begin{equation}\label{niru vasu p1c e010}
p(z) \prec \frac{1 + Az}{1 - z} \quad \mbox{for} \quad z\in\mathbb{D},
\end{equation}
that is, $p \in \mathcal{SP}(\alpha).$
\end{lem}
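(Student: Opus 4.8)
The plan is to realize $h_1$ as the logarithmic-derivative transform of the target map and then run a Jack-lemma argument. Write $q(z)=\frac{1+Az}{1-z}$, the univalent (M\"obius) function on the right of (\ref{niru vasu p1c e010}). A direct computation gives $q'(z)=(A+1)/(1-z)^2$, hence $\frac{zq'(z)}{q^2(z)}=\frac{(A+1)z}{(1+Az)^2}=h_1(z)$. Thus hypothesis (\ref{niru vasu p1c e005}) says exactly that $\frac{zp'(z)}{p^2(z)}\prec\frac{zq'(z)}{q^2(z)}$, and the goal is the subordination $p\prec q$.

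Next I would define $\omega=q^{-1}\circ p$, which is analytic near the origin with $\omega(0)=0$ and is single-valued wherever $p(z)\in q(\mathbb{D})$. Note $p$ is zero-free in $\mathbb{D}$, since a zero of $p$ would force a pole of $zp'/p^2$, contradicting (\ref{niru vasu p1c e005}); the case $p\equiv 1$ (that is $\omega\equiv 0$) gives $p\prec q$ trivially, so otherwise $\omega$ is non-constant. Suppose, for contradiction, that $p\not\prec q$. Then $|\omega|$ is not bounded by $1$, so there is a first point $z_0\in\mathbb{D}$ with $|\omega(z_0)|=1$ and $|\omega(z)|<1$ for $|z|<|z_0|$. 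By the Jack lemma (Lemma \ref{niru vasu p1c l001}) one has $z_0\omega'(z_0)=k\,\omega(z_0)$ with $k\geq 1$. Differentiating $p=q\circ\omega$ and using $\frac{\zeta q'(\zeta)}{q^2(\zeta)}=h_1(\zeta)$ yields
\[
\frac{z_0 p'(z_0)}{p^2(z_0)}=\frac{z_0\,\omega'(z_0)\,q'(\omega(z_0))}{q^2(\omega(z_0))}=k\,\frac{\omega(z_0)\,q'(\omega(z_0))}{q^2(\omega(z_0))}=k\,h_1(\omega(z_0)).
\]

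The crux is then a geometric statement about $h_1(\mathbb{D})$. Factoring $h_1(z)=\frac{A+1}{A}\cdot\frac{Az}{(1+Az)^2}$ and recognizing $\zeta\mapsto\zeta/(1+\zeta)^2$ as a normalized Koebe-type function that maps $\mathbb{D}$ univalently onto $\mathbb{C}\setminus[1/4,\infty)$, together with the evaluation $\frac{A+1}{A}=1+\bar A=2\cos\alpha\,e^{i\alpha}$ (valid since $|A|=1$ and $\cos\alpha>0$), I would identify $h_1(\mathbb{D})=\mathbb{C}\setminus R$, where $R=\{se^{i\alpha}:s\geq\tfrac{\cos\alpha}{2}\}$ is a radial slit in direction $e^{i\alpha}$ not meeting the origin. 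Since $|\omega(z_0)|=1$, the value $h_1(\omega(z_0))$ lies on $\partial h_1(\mathbb{D})=R$; writing it as $s_0e^{i\alpha}$ with $s_0\geq\tfrac{\cos\alpha}{2}$ and using $k\geq 1$ gives $k\,h_1(\omega(z_0))=(ks_0)e^{i\alpha}$ with $ks_0\geq s_0\geq\tfrac{\cos\alpha}{2}$, so $k\,h_1(\omega(z_0))\in R$, i.e.\ $k\,h_1(\omega(z_0))\notin h_1(\mathbb{D})$. On the other hand, subordination (\ref{niru vasu p1c e005}) forces $\frac{z_0p'(z_0)}{p^2(z_0)}\in h_1(\mathbb{D})$, contradicting the displayed identity. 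Hence $|\omega|<1$ throughout $\mathbb{D}$, so $p\prec q$, that is $p\in\mathcal{SP}(\alpha)$.

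The main obstacle is the geometric step: correctly identifying $h_1(\mathbb{D})$ as a slit plane and verifying that the omitted set $R$ is a ray invariant under multiplication by $k\geq 1$. This invariance is precisely what neutralizes the Jack-lemma factor $k$, since the boundary value $h_1(\omega(z_0))$ and all its positive dilations by $k\ge 1$ remain on the omitted ray. The supporting computations (the Koebe normalization and the identity $\frac{A+1}{A}=2\cos\alpha\,e^{i\alpha}$) are routine once set up, and the pole of $h_1$ on $\partial\mathbb{D}$ at $\zeta=-1/A$ corresponds to $p(z_0)=0$, a case already excluded by the zero-freeness of $p$.
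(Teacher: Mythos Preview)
Your argument is correct and follows the same Jack-lemma strategy as the paper: set $\omega=q^{-1}\circ p$ with $q(z)=(1+Az)/(1-z)$, suppose $|\omega(z_0)|=1$, and obtain $z_0p'(z_0)/p^2(z_0)=k\,h_1(\omega(z_0))\notin h_1(\mathbb{D})$. The paper simply asserts this last exclusion, whereas your explicit identification of $h_1(\mathbb{D})$ as the slit plane $\mathbb{C}\setminus\{se^{i\alpha}:s\geq\tfrac{\cos\alpha}{2}\}$ (via the Koebe normalization and $\tfrac{A+1}{A}=2\cos\alpha\,e^{i\alpha}$), together with the observation that this omitted ray is invariant under dilation by $k\geq1$, is precisely the missing justification.
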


\begin{proof}

Let $p(z) = {(1 + A \omega(z))}/{ (1 - \omega(z))}.$ Then $\omega$ is analytic in $\mathbb{D}$ and  $\omega(0) = 0$. A simple computation shows that
$$
\frac{z p'(z)}{p^2(z)} = \frac{(A + 1)z \omega'(z)}{(1 + {\omega}(z))^2} \quad \mbox{for} \quad z\in\mathbb{D}.
$$
Now the subordination relation ({\ref{niru vasu p1c e010}}) holds if and only if $|\omega(z)| < 1$ for  $z$ in $\mathbb{D}$. Assume that there exists a point $z_0\in \mathbb{D}$ such that  $|\omega(z_0)| = 1$. Then by Jack lemma,  $z_0 \omega'(z_0) = k_0 \omega(z_0)$ and $k_0\geq 1$. For such $z_0$ we have $z_0 p'(z_0)/ p^2(z_0) = k_0 h_1(\omega(z_0))$ which does not contain in $h_1(\mathbb{D})$ because $|\omega(z_0)| = 1$ and $k_0 \geq 1.$ This contradicts the subordination condition ({\ref{niru vasu p1c e005}}).
Hence $|\omega(z)| < 1$ for all $z\in \mathbb{D}$ which yields the desired result.
\end{proof}

Using Lemma \ref{niru vasu p1c l005} we prove the following theorem.

\begin{thm}\label{niru vasu p1c t001}
Let $f\in \mathcal{A}$ and  $A = e^{-2i\alpha}$ be a complex constant with $|\alpha|< \pi/2.$ If
$$
\frac{1 + zf''(z)/f'(z)}{zf'(z)/f(z)}\prec 1 + \frac{(1 + A)z}{(1 + Az)^2} \quad \mbox{for} \quad z\in\mathbb{D}
$$
then $f\in \mathcal{SP}(\alpha)$.
\end{thm}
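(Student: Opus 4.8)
The plan is to recognize the theorem as an immediate corollary of Lemma \ref{niru vasu p1c l005} once the correct auxiliary function is introduced. Since $f\in\mathcal{SP}(\alpha)$ means exactly $zf'(z)/f(z)\prec(1+Az)/(1-z)$, and the conclusion of Lemma \ref{niru vasu p1c l005} is $p(z)\prec(1+Az)/(1-z)$, the natural choice is
$$
p(z):=\frac{zf'(z)}{f(z)}.
$$
First I would observe that $p$ is analytic in $\mathbb{D}$ with $p(0)=1$: from $f(z)=z+\sum_{n\ge2}a_nz^n$ one gets $zf'(z)/f(z)=(1+2a_2z+\cdots)/(1+a_2z+\cdots)\to1$, while the analyticity of $p$ is part of the standing assumption implicit in the hypothesis, namely that $f$ and $f'$ are zero-free on the punctured disk so that the left-hand subordinand is a genuine analytic function on $\mathbb{D}$.

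The heart of the argument is a logarithmic-derivative identity. Differentiating $\log p=\log z+\log f'-\log f$ and multiplying by $z$ gives $zp'(z)/p(z)=1+zf''(z)/f'(z)-zf'(z)/f(z)$; dividing once more by $p(z)=zf'(z)/f(z)$ yields
$$
\frac{zp'(z)}{p^2(z)}=\frac{1+zf''(z)/f'(z)}{zf'(z)/f(z)}-1.
$$
Thus the expression controlled by Lemma \ref{niru vasu p1c l005} is precisely the theorem's subordinand reduced by the constant $1$.

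Finally I would use that subordination is invariant under subtraction of a constant: if $F=G\circ\omega$ for a Schwarz function $\omega$, then $F-1=(G-1)\circ\omega$, so $F\prec G$ implies $F-1\prec G-1$. Applying this to the hypothesis, whose right-hand side is $1+(1+A)z/(1+Az)^2$, turns it into
$$
\frac{zp'(z)}{p^2(z)}\prec\frac{(A+1)z}{(1+Az)^2}=h_1(z),
$$
which is exactly relation (\ref{niru vasu p1c e005}). Lemma \ref{niru vasu p1c l005} then delivers $p(z)\prec(1+Az)/(1-z)$, that is, $f\in\mathcal{SP}(\alpha)$. I expect no real obstacle here: the computation is routine logarithmic differentiation, and the only point deserving a word of care is the analyticity of $p$, which is guaranteed by the very meaningfulness of the hypothesized subordination.
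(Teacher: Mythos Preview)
Your proposal is correct and follows essentially the same approach as the paper: set $p(z)=zf'(z)/f(z)$, verify the identity $zp'(z)/p^2(z)=\dfrac{1+zf''(z)/f'(z)}{zf'(z)/f(z)}-1$, subtract $1$ from the hypothesized subordination to obtain condition~(\ref{niru vasu p1c e005}), and invoke Lemma~\ref{niru vasu p1c l005}. Your version simply spells out the logarithmic-differentiation step and the ``subtract a constant'' observation that the paper leaves as a ``simple computation''.
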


\begin{proof}

Let $p(z) = \frac{zf'(z)}{f(z)}$. Then $p$ is analytic in $\mathbb{D}$ and $p(0)= 1$. A simple computation shows that
$$
\frac{zp'(z)}{p^2(z)} = \frac{1 + zf''(z)/f'(z)}{zf'(z)/f(z)} - 1\prec\frac{(1 + A)z}{(1 + Az)^2} \quad \mbox{for} \quad z\in\mathbb{D}.
$$
In view of Lemma \ref{niru vasu p1c l005}, it follows that $p(z)\prec (1 + Az)/ (1 - z)$ and hence $f\in \mathcal{SP}(\alpha)$.

Using Theorem \ref{niru vasu p1c t001}, we obtain the following result.
\end{proof}

\begin{cor}
Let  $A =e^{-2i\alpha}$ be a complex constant with $|\alpha|< \pi/2.$ Then $\mathcal{G}_b \subset \mathcal{S}^*[A, -1] := \mathcal{SP}(\alpha)$ when
$b = |1 + A|/ 4 .$
\end{cor}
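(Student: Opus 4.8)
The plan is to obtain this corollary as a direct consequence of Theorem~\ref{niru vasu p1c t001}. Because $A=e^{-2i\alpha}$ with $|\alpha|<\pi/2$ gives $\mathcal{S}^*[A,-1]=\mathcal{SP}(\alpha)$, it suffices to check that every $f\in\mathcal{G}_b$ with $b=|1+A|/4$ satisfies the subordination hypothesis of that theorem. To this end I would set
$$
g(z):=\frac{1+zf''(z)/f'(z)}{zf'(z)/f(z)}-1,
$$
so that $g$ is analytic on $\mathbb{D}$ with $g(0)=0$, and the defining condition of $\mathcal{G}_b$ reads precisely $|g(z)|\le b$ for all $z\in\mathbb{D}$. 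Since $g(0)=0$, either $g\equiv 0$ or, by the maximum modulus principle, $|g(z)|<b$ throughout $\mathbb{D}$; in either case $g(\mathbb{D})$ is contained in the open disc $\{w:|w|<b\}$. Recalling $h_1(z)=(A+1)z/(1+Az)^2$ from Lemma~\ref{niru vasu p1c l005}, which is univalent with $h_1(0)=0$, the subordination $g\prec h_1$ will follow from the standard criterion once I show that this open disc lies inside $h_1(\mathbb{D})$.

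The main step is therefore to describe $h_1(\mathbb{D})$. First I would substitute $u=Az$ and write $h_1(z)=\frac{1+A}{A}\cdot\frac{u}{(1+u)^2}$; since $|A|=1$ the variable $u$ ranges over all of $\mathbb{D}$, and the Koebe-type map $u\mapsto u/(1+u)^2$ carries $\mathbb{D}$ onto the plane slit along $[1/4,\infty)$. Hence $h_1(\mathbb{D})$ is the complement of a single rectilinear ray. Its tip is located by the boundary critical point of $h_1$: a computation gives
$$
h_1'(z)=(1+A)\frac{1-Az}{(1+Az)^3},
$$
which vanishes at $z=1/A$ (a point of $\partial\mathbb{D}$ as $|A|=1$), with $h_1(1/A)=\frac{1+A}{4A}$. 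Thus the omitted ray emanates from $\frac{1+A}{4A}$, and because this tip is a positive multiple of the ray's direction $\frac{1+A}{A}$, every point of the ray has modulus at least $\bigl|\tfrac{1+A}{4A}\bigr|=\tfrac{|1+A|}{4}$. Consequently the largest open disc centred at the origin contained in $h_1(\mathbb{D})$ has radius exactly $|1+A|/4$.

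Finally, with the choice $b=|1+A|/4$ the inclusion $\{w:|w|<b\}\subseteq h_1(\mathbb{D})$ holds, so $g(\mathbb{D})\subseteq h_1(\mathbb{D})$ together with $g(0)=h_1(0)=0$ yields $g\prec h_1$, that is, $\frac{1+zf''(z)/f'(z)}{zf'(z)/f(z)}\prec 1+h_1(z)$. Theorem~\ref{niru vasu p1c t001} then gives $f\in\mathcal{SP}(\alpha)=\mathcal{S}^*[A,-1]$, proving the stated inclusion. I expect the geometric identification of $h_1(\mathbb{D})$ as a slit plane, and the verification that the distance from the origin to the omitted ray equals precisely $b$, to be the heart of the argument; everything else is a formal application of the subordination principle and of Theorem~\ref{niru vasu p1c t001}.
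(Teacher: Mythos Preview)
Your proof is correct and follows essentially the same route as the paper: both reduce the corollary to Theorem~\ref{niru vasu p1c t001} by showing that the disc $\{|w|<b\}$ lies inside $h_1(\mathbb{D})$, and then invoking the subordination criterion for univalent superordinates. The paper obtains this containment by computing $\min_{\theta}|h_1(e^{i\theta})|=|1+A|/4$ (implicitly using univalence of $h_1$), whereas you make the geometry explicit via the Koebe substitution $u=Az$; this is the same argument carried out at a slightly greater level of detail.
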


\begin{proof}

For $f\in \mathcal{G}_b$, we have
$$
\frac{1 + zf''(z)/f'(z)}{zf'(z)/f(z)}\prec  1 + bz \quad \mbox{for} \quad z\in\mathbb{D}.
$$
Let $h_2(z) = 1+\frac{(1 + A)z}{(1 + Az)^2}.$
Then a simple computation shows that
$$
\min\{|h_2(e^{i\theta}) - 1| : \theta \in [0, 2\pi)\} = \frac{|1 + A|}{4}.
$$
If $b = |1 + A|/4 $ then by using the definition of subordination we obtain $1 + bz \prec h_2(z)$. Therefore  from Theorem \ref{niru vasu p1c t001}, it follows that $f \in \mathcal{S}^*[A, -1] := \mathcal{SP}(\alpha).$

\end{proof}

\subsection{Starlike  univalent functions   of order   $\alpha$}
Let $B(z_0;r)$ denote the open ball centred at $z_0$ and radius $r$.  We say that   $f\in \mathcal{H}(\alpha)$, $0< \alpha <1$,   if  $f\in \mathcal{A}$ and
$A_\alpha(z)= \frac{2 \alpha f(z)}{z f'(z)}$  maps the unit disk  $\mathbb{D}$ into
$B(1;1)$.
Since the conformal mapping $B(w)= (1+w)^{-1}$  maps   $\mathbb{D}$  onto
 ${\rm Re\,} w >1/2$,  one can see that  the classes $\mathcal{S}^*(\alpha)$  and
$\mathcal{H}(\alpha)$ coincide.

Let   $f\in \mathcal{H}(\alpha)$    and consider the function
$h(z):=h_\beta(z)=(\frac{z}{f(z)})^\beta-1$, where  $0< \beta \leq 1$.
If $f\in \mathcal{H}(\alpha)$, $1/2 \leq \alpha <1$, using   Jack's lemma, \"{O}rnek \cite{Orneck}
showed that $h$   satisfies  the condition of  the Schwarz lemma:
$h$  maps  $\mathbb{D}$ onto itself  and $h(0)=0$,   and  he has   proved

\begin{lem}
Let  $f\in\mathcal{H}(\alpha)$, $1/2 \leq \alpha <1$ and $1/\beta=2(1-\alpha)$. Then
\begin{enumerate}
\item[(i)] $\displaystyle |f(z)|\leq  \frac{|z|}{(1-|z|)^{1/\beta}}$\\[0.mm]
\item[(ii)]  $|f''(0)|\leq 2/\beta$.
\end{enumerate}
For $\beta=1$, we find
\begin{enumerate}
\item[(i')] $\displaystyle |f(z)|\leq  \frac{|z|}{(1-|z|)}$\\[0.mm]
\item[(ii')] $|f''(0)|\leq 2$.
\end{enumerate}
\end{lem}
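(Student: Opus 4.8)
The plan is to exploit the fact, recalled just above from \"{O}rnek's work, that the function $h(z)=h_\beta(z)=(z/f(z))^\beta-1$ is a Schwarz function whenever $f\in\mathcal{H}(\alpha)$ with $1/2\le\alpha<1$ and $1/\beta=2(1-\alpha)$. Since $f\in\mathcal{A}$ is starlike (the classes $\mathcal{S}^*(\alpha)$ and $\mathcal{H}(\alpha)$ coincide) it does not vanish on $\mathbb{D}\setminus\{0\}$ and $f(z)/z\to 1$ as $z\to 0$, so the principal branch of $(z/f(z))^\beta$ is well defined, analytic, and equal to $1$ at the origin; thus $h$ is analytic with $h(0)=0$. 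The whole proof then reduces to applying the Schwarz lemma to $h$ in two different ways: the pointwise bound $|h(z)|\le|z|$ for part (i) and the derivative bound $|h'(0)|\le 1$ for part (ii).

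For part (i), I would rewrite $|h(z)|\le|z|$ as $|(z/f(z))^\beta-1|\le|z|$, which says that $(z/f(z))^\beta$ lies in the closed ball $B(1;|z|)$. For $|z|<1$ the origin is exterior to this ball, so every point of it has modulus at least $1-|z|$; hence $|z/f(z)|^\beta\ge 1-|z|$. Taking the positive $\beta$-th root (permissible since $\beta>0$ and $1-|z|>0$) gives $|z/f(z)|\ge(1-|z|)^{1/\beta}$, and rearranging yields
\[
|f(z)|\le\frac{|z|}{(1-|z|)^{1/\beta}},
\]
which is exactly the claimed bound.

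For part (ii), I would compute the leading Taylor coefficient of $h$. Writing $f(z)=z+a_2z^2+\cdots$ one has $z/f(z)=1-a_2z+\cdots$, so $(z/f(z))^\beta=1-\beta a_2 z+\cdots$ and therefore $h(z)=-\beta a_2 z+\cdots$; in particular $h'(0)=-\beta a_2$. The Schwarz lemma gives $|h'(0)|\le 1$, so $\beta|a_2|\le 1$, and since $f''(0)=2a_2$ this reads $|f''(0)|\le 2/\beta$. Finally, the statements (i$'$) and (ii$'$) are merely the specialization $\beta=1$ (equivalently $\alpha=1/2$) of (i) and (ii).

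I do not expect a serious obstacle: once the input that $h$ is a Schwarz function is taken from the preceding discussion, the lemma is an almost immediate consequence of the Schwarz lemma. The only points requiring genuine care are the justification that the branch of $(z/f(z))^\beta$ is single-valued throughout $\mathbb{D}$ (handled by the nonvanishing of $f$ and the normalization $f(z)/z\to1$), and the elementary geometric observation that the minimum modulus over $B(1;|z|)$ equals $1-|z|$; both are routine.
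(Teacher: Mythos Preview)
The paper does not actually prove this lemma in detail; it attributes the result to \"{O}rnek and merely records the key input, namely that Jack's lemma forces $h_\beta(z)=(z/f(z))^\beta-1$ to be a Schwarz function. Your proposal takes exactly this input from the preceding paragraph and carries out the two obvious consequences of the Schwarz lemma (the pointwise bound for (i) and the derivative bound for (ii)), which is precisely the argument the paper is alluding to; the proof is correct and coincides with the approach the paper outlines.
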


\begin{example}
Let   $k_\beta(z)=z (1 +z)^{-1/\beta}$, $0< \beta \leq 1$.
Then $\frac{zk'_\beta(z)}{k_\beta(z)}=A_\beta$, where  $A_\beta(z)= 1 - \frac{1}{\beta}\frac{z}{1+z}$.
Since $A_\beta$   maps   $\mathbb{D}$  onto  ${\rm Re\,} w >1 - \frac{1}{2\beta}$.  One can see that   $ k_\beta$ belongs
$\mathcal{S}^*(\alpha)$ if and only if  $\beta \geq \frac{1}{2 (1- \alpha)} $. If  $1/\beta >2$ then $k_\beta$  is not univalent in $\mathbb{D}$.
\end{example}

The subject related  to Jack's lemma  has been discussed  by \"{O}rnek \cite{Orneck} in
a recent paper. Recently, Mateljevi\'c \cite{Mateljevic} has  extended \"{O}rnek's  result and obtained  the following.

\begin{thm}
If $f$ belongs  $\mathcal{S}^*(\alpha)$, $0\le \alpha <1$,     and
$1/\beta=2(1-\alpha)$, then

\begin{enumerate}
\item[(i)] $\displaystyle|f(z)|\leq  \frac{|z|}{(1-|z|)^{1/\beta}}$
\item[(ii)]  $|f''(0)|\leq 2/\beta$.
\end{enumerate}
\end{thm}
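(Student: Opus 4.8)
The plan is to adapt the Örnek--Jack argument recalled just above to the full range $0\le\alpha<1$, the only change being that $\beta=1/(2(1-\alpha))$ is now allowed to dip below $1$. The whole proof runs through the auxiliary function
\[
h(z)=\left(\frac{z}{f(z)}\right)^{\beta}-1 ,
\]
where the power is defined through the branch of $\log(z/f(z))$ vanishing at the origin. This is legitimate since a starlike $f$ is univalent, so $f(z)/z$ is analytic and nonvanishing on the simply connected disk $\mathbb{D}$ and equals $1$ at $0$; hence $z/f(z)$ is a nonvanishing analytic function with value $1$ at the origin, $h$ is analytic with $h(0)=0$, and $\bigl|(z/f(z))^{\beta}\bigr|=|z/f(z)|^{\beta}$. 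If $f$ is the identity everything is trivial, so I assume $h$ is non-constant.

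First I would record the logarithmic-derivative identity obtained from $h(z)+1=(z/f(z))^{\beta}$. Writing $p(z)=zf'(z)/f(z)$, differentiation gives
\[
\frac{z\,h'(z)}{h(z)+1}=\beta\bigl(1-p(z)\bigr).
\]
The heart of the argument is to show $|h|<1$ on $\mathbb{D}$ by contradiction through Jack's lemma (Lemma \ref{niru vasu p1c l001}). If $|h|$ first attained the value $1$ at a point $z_0$ on some circle $|z|=r$, then $z_0h'(z_0)=k_0h(z_0)$ with $k_0\ge1$ and $|h(z_0)|=1$, and substituting into the identity yields $p(z_0)=1-\tfrac{k_0}{\beta}\cdot\tfrac{h(z_0)}{h(z_0)+1}$. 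The decisive elementary fact is that $\operatorname{Re}\!\bigl(w/(w+1)\bigr)=\tfrac12$ for every unimodular $w\ne-1$, so
\[
\operatorname{Re}p(z_0)=1-\frac{k_0}{2\beta}\le 1-\frac{1}{2\beta}=\alpha ,
\]
the last equality being exactly the hypothesis $1/\beta=2(1-\alpha)$. This contradicts $\operatorname{Re}p>\alpha$, and therefore $h$ maps $\mathbb{D}$ into itself with $h(0)=0$.

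With the Schwarz lemma now in force for $h$, both conclusions follow quickly. For (i), from $|h(z)|\le|z|$ the reverse triangle inequality gives $|z/f(z)|^{\beta}\ge 1-|z|$; taking $\beta$-th roots and rearranging produces $|f(z)|\le|z|/(1-|z|)^{1/\beta}$. For (ii), expanding $z/f(z)=1-a_2z+\cdots$ gives $h(z)=-\beta a_2z+\cdots$, so $h'(0)=-\beta a_2$; the Schwarz bound $|h'(0)|\le1$ then forces $|a_2|\le1/\beta$, and since $f''(0)=2a_2$ this is precisely $|f''(0)|\le 2/\beta$.

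I expect the one genuinely substantive step to be the verification $|h|<1$, i.e.\ that $h$ really does satisfy the Schwarz hypotheses; the growth and coefficient estimates are then immediate consequences of the classical Schwarz lemma. Within that step the only subtle point is the identity $\operatorname{Re}(w/(w+1))=\tfrac12$ on the unit circle, which is exactly what turns the order-$\alpha$ starlikeness of $f$ into the sharp threshold that the bound $k_0\ge1$ needs in order to produce the contradiction; the branch conventions fixed at the outset guarantee that $h$ is single-valued, so that Jack's lemma applies without ambiguity.
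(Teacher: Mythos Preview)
The paper does not actually give a proof of this theorem; it is stated as Mateljevi\'{c}'s extension of \"{O}rnek's lemma (the latter covering only $1/2\le\alpha<1$) and attributed to the cited preprint. The paper does, however, record that \"{O}rnek's argument proceeds via Jack's lemma applied to $h(z)=(z/f(z))^{\beta}-1$ to show that $h$ satisfies the Schwarz hypotheses, and this is exactly the route you take.

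Your proof is correct and complete. The identity $\operatorname{Re}\bigl(w/(w+1)\bigr)=\tfrac12$ for $|w|=1$ is precisely what converts the Jack-lemma output $k_0\ge1$ into $\operatorname{Re}\,p(z_0)\le 1-\tfrac{1}{2\beta}=\alpha$, and the implicit point that $h(z_0)\ne-1$ (since $(z_0/f(z_0))^{\beta}\ne0$) ensures the quotient is well defined. The deductions of (i) and (ii) from the Schwarz bound $|h(z)|\le|z|$ are then immediate. In short, you have supplied the argument the paper defers to its reference, along the same line the paper sketches for the restricted range.
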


In particular,  it can be seen that   Ornek's result   (i')
$|f(z)|\leq  \frac{|z|}{(1-|z|)}$  and
(ii')$ |f''(0)|\leq 2$ if $f$ belongs  to  the class $\mathcal{S}^*(1/2)$. For  convex functions (i')  holds.
Since convex functions are in  $\mathcal{S}^*(1/2)$,  this  result  is a
generalization  of corresponding one for convex functions.

\vspace{1cm}
\noindent\textbf{Acknowledgement:}  The authors thank the referee for useful comments and suggestions. The first author thanks UGC for financial support.

\end{document}